\newtheorem{theorem}{Theorem}[section]
\newtheorem{lemma}[theorem]{Lemma}
\newtheorem{prop}[theorem]{Proposition}
\theoremstyle{definition}
\newtheorem{problem}[theorem]{Problem}
\theoremstyle{remark}
\newtheorem{remark}[theorem]{Remark}
\crefname{equation}{}{}
\Crefname{enumi}{}{}
\DeclareMathOperator{\id}{id}
\newcommand{\ignoreSpellCheck}[1]{#1}
\begin{document}

\title[\ignoreSpellCheck{MW}--problem]
{Another look at the Matkowski and Wesołowski problem yielding a new class of solutions}
\author[J.~Morawiec]{Janusz Morawiec}
\address{Institute of Mathematics, University of Silesia, Bankowa 14, PL-40-007 Katowice, Poland}
\email{morawiec@math.us.edu.pl}

\author[T.~Zürcher]{Thomas Zürcher}
\address{Institute of Mathematics, University of Silesia, Bankowa 14, PL-40-007 Katowice, Poland}
\email{thomas.zurcher@us.edu.pl}

\subjclass{Primary 37E05, 39B12; Secondary 44A60, 26A30}
\keywords{Singular invariant measures, iterative functional equations, moment problem, Cantor type functions}

\begin{abstract}
The following MW--problem was posed independently by Janusz Matkowski and Jacek Wesołowski in different forms in 1985 and 2009, respectively: Are there increasing and continuous functions $\varphi\colon [0,1]\to [0,1]$, distinct from the identity on $[0,1]$, such that $\varphi(0)=0$, $\varphi(1)=1$ and $\varphi(x)=\varphi(\frac{x}{2})+\varphi(\frac{x+1}{2})-\varphi(\frac{1}{2})$ for every $x\in[0,1]$?
By now, it is known that each of the de~Rham functions $R_p$, where $p\in(0,1)$, is a solution of the MW--problem, and for any Borel probability measure $\mu$ concentrated on $(0,1)$ the formula $\phi_\mu(x)=\int_{(0,1)}R_p(x)\, d\mu(p)$ defines a solution $\phi_\mu\colon[0,1]\to[0,1]$ of this problem as well. In this paper, we give a new family of solutions of the MW--problem consisting of Cantor-type functions. We also prove that there are strictly increasing solutions of the MW--problem that are not of the above integral form with any Borel probability measure $\mu$.
\end{abstract}

\maketitle


\renewcommand{\theequation}{1.\arabic{equation}}\setcounter{equation}{0}
\section{Introduction}\label{S:1}

During the 47th International Symposium on Functional Equations in 2009, Jacek Wesołowski asked whether the identity on $[0,1]$ is the only increasing and continuous solution $\varphi\colon [0,1]\to [0,1]$ of
\begin{equation}\label{eq:e}
\varphi(x)=\varphi\left(\frac{x}{2}\right)+\varphi\left(\frac{x+1}{2}\right)
-\varphi\left(\frac{1}{2}\right)
\end{equation}
satisfying the following boundary conditions
\begin{equation}\label{eq:c}
\varphi(0)=0\hspace{3ex}\hbox{ and }\hspace{3ex}\varphi(1)=1;
\end{equation}
here and throughout this paper \emph{increasing} means weakly increasing. This question has been posed in connection with studying probability measures in the plane that are invariant by \lq\lq winding\rq\rq\ (see \cite{MisiewiczWesolowski12}). It turns out that twenty-four years earlier Janusz Matkowski posed a problem in \cite{Matkowski85}, which is equivalent to Wesołowski's question. 

A negative answer to Wesołowski's question can be found in \cite[Section~5.C]{Kairies97}, where it is noted that each of the \ignoreSpellCheck{de~Rham} functions satisfies equation \eqref{eq:e};
recall that the \ignoreSpellCheck{de~Rham} function $\varphi_p\colon[0,1]\to\mathbb R$, where $p\in(0,1)$, is the unique bounded solution of the system of equations
\begin{equation*}
\begin{cases}
\varphi_p\left(\frac{x}{2}\right)=p\varphi_p(x) &\text{for }x\in[0,1],\\ \varphi_p\left(\frac{x+1}{2}\right)=(1-p)\varphi_p(x)+p &\text{for }x\in[0,1]
\end{cases} 	
\end{equation*}
(see \cite[Section~2]{Rham56}, cf.~\cite[Section~5.C]{Kairies97}, see also \cite[Theorem~3.3]{Hata85} for a more general system of functional equations). 
It is well known that $\varphi_\frac{1}{2}=\id_{[0,1]}$, the identity function on $[0,1]$, and for each $p\neq\frac{1}{2}$ the \ignoreSpellCheck{de~Rham} function $\varphi_p$ is strictly increasing, singular, and continuous (see p.~106 and p.~102 in~\cite{Rham56}); it is even Hölder continuous (see \cite[Theorem~2.1]{BergKruppel00b}). Moreover, according to \cite[Theorem~1.1]{MorawiecZurcher18} or (stated slightly differently) \cite[Section~5.C]{Kairies97}, for any $p\in(0,1)$ we have
\begin{equation*}
\varphi_p\left(\sum_{n=1}^\infty\frac{x_n}{2^n}\right)=\sum_{n=1}^{\infty}x_n p^{n-\sum_{i=1}^{n-1}x_i}(1-p)^{\sum_{i=1}^{n-1}x_i}\quad\text{for }
(x_n)_{n\in\mathbb N}\in\{0,1\}^{\mathbb N}.
\end{equation*}
In particular,
\begin{equation}\label{eq:13}
\varphi_p\left(\frac{1}{2^n}\right)=p^n\quad\text{for }p\in(0,1)\text{ and }n\in\mathbb N_0.
\end{equation}

From now on, we say that  a \emph{solution of the Matkowski and Wesołowski problem} (\ignoreSpellCheck{MW}--problem for short) is any increasing and continuous function $\varphi\colon[0,1]\to[0,1]$ satisfying \eqref{eq:e} and \eqref{eq:c}. 

A large family of solutions of the \ignoreSpellCheck{MW}--problem containing functions that are not Hölder continuous was constructed in \cite{MorawiecZurcher21} (see Theorem~2.4 and Section~5). To describe this family (in a slightly different way) let us denote by $\mathcal B$ the $\sigma$-algebra of all Borel subsets of $[0,1]$, by $\mathcal M$ the family of all probability measures defined on $\mathcal B$, and by $\mathcal M_{(0,1)}$ the subfamily of all $\mu\in\mathcal M$ with $\mu(\{0,1\})=0$. Then define $\Theta\colon (0,1)\times [0,1]\to [0,1]$ by 
\begin{equation*}
\Theta(p,x)=\varphi_p(x)
\end{equation*}
and note that $\Phi$ is differentiable with respect to the first variable (\hspace{-1.1pt}see \cite[Proposition~3.1]{Kruppel09}; cf.\ \cite[Theorem~4.6]{HataYamaguti84} for the case $p=\frac{1}{2}$) and Hölder continuous with respect to the second variable (see \cite[Theorem~2.1]{BergKruppel00b}). Finally, making use of Lebesgue's Dominated Convergence Theorem, we conclude that for every $\mu\in\mathcal M_{(0,1)}$ the formula
\begin{equation}\label{eq:int}
\phi_\mu(x)=\int_{(0,1)}\Theta(p,x)\,d\mu(p)
\end{equation}
defines a continuous function $\phi_\mu\colon[0,1]\to\mathbb R$ (see \cite[Satz~5.6]{Elstrodt05}). Since $\Theta$ is strictly increasing with respect to the second variable, so is $\phi_\mu$. It is easy to check that $\phi_\mu$ is a solution of the \ignoreSpellCheck{MW}--problem. Therefore, we have the following fact.

\begin{prop}\label{pp:11}
For every $\mu\in\mathcal M_{(0,1)}$ the function $\phi_\mu\colon [0,1]\to [0,1]$ given by~\eqref{eq:int} is a strictly increasing solution of the \ignoreSpellCheck{MW}--problem.
\end{prop}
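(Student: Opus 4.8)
The plan is to derive every required property of $\phi_\mu$ from the corresponding pointwise-in-$p$ property of the \ignoreSpellCheck{de~Rham} functions $\varphi_p=\Theta(p,\cdot)$ and then integrate against $\mu$. First I would record the elementary facts about each $\varphi_p$ that I need: that $0\le\varphi_p(x)\le 1$ for all $x\in[0,1]$ and $p\in(0,1)$, that $\varphi_p(0)=0$ and $\varphi_p(1)=1$, and that $\varphi_p$ solves \eqref{eq:e}. The boundary values come straight from the defining system evaluated at its fixed points: putting $x=0$ in the first equation gives $\varphi_p(0)=p\varphi_p(0)$, hence $\varphi_p(0)=0$ since $p\neq 1$; putting $x=1$ in the second gives $\varphi_p(1)=(1-p)\varphi_p(1)+p$, hence $\varphi_p(1)=1$.

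For the functional equation at the level of $\varphi_p$, I would first note $\varphi_p(\tfrac12)=p\varphi_p(1)=p$, and then substitute the two defining relations into the right-hand side of \eqref{eq:e}:
\[
\varphi_p\Bigl(\tfrac{x}{2}\Bigr)+\varphi_p\Bigl(\tfrac{x+1}{2}\Bigr)-\varphi_p\Bigl(\tfrac12\Bigr)
=p\varphi_p(x)+\bigl[(1-p)\varphi_p(x)+p\bigr]-p=\varphi_p(x),
\]
so each $\varphi_p$ indeed satisfies \eqref{eq:e}. To transfer this to $\phi_\mu$ I would use linearity of the integral in~\eqref{eq:int}: for fixed $x$ the three functions $p\mapsto\varphi_p(\tfrac{x}{2})$, $p\mapsto\varphi_p(\tfrac{x+1}{2})$, $p\mapsto\varphi_p(\tfrac12)$ are measurable, being continuous in $p$ as recalled before the statement, and bounded by $1$; integrating the displayed identity over $(0,1)$ and splitting the integral then yields $\phi_\mu(x)=\phi_\mu(\tfrac{x}{2})+\phi_\mu(\tfrac{x+1}{2})-\phi_\mu(\tfrac12)$, which is \eqref{eq:e} for $\phi_\mu$.

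The boundary conditions \eqref{eq:c} and the range are then immediate: $\phi_\mu(0)=\int_{(0,1)}\varphi_p(0)\,d\mu=0$, while $\phi_\mu(1)=\int_{(0,1)}1\,d\mu=\mu((0,1))=1$, using $\mu(\{0,1\})=0$; and $0\le\varphi_p\le 1$ integrates to $0\le\phi_\mu\le 1$, so $\phi_\mu$ maps $[0,1]$ into $[0,1]$. Continuity and weak monotonicity of $\phi_\mu$ are already supplied by the discussion preceding the statement — continuity from Lebesgue's Dominated Convergence Theorem together with Hölder continuity of $\Theta$ in its second variable, and monotonicity from the monotonicity of $\Theta(p,\cdot)$.

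The one place that needs genuine care — and what I regard as the main obstacle — is upgrading weak monotonicity to strict monotonicity, since a pointwise strict inequality need not be preserved under integration. For $0\le x<y\le 1$ the function $g(p)=\varphi_p(y)-\varphi_p(x)$ is strictly positive on all of $(0,1)$, because every $\varphi_p$ (including $\varphi_{1/2}=\id_{[0,1]}$) is strictly increasing. I would then argue that a strictly positive (continuous, hence measurable) integrand has strictly positive integral against the probability measure $\mu$: the sets $\{p\in(0,1):g(p)>\tfrac1n\}$ increase to $(0,1)$, which has full $\mu$-measure, so at least one of them has positive measure, whence $\phi_\mu(y)-\phi_\mu(x)=\int_{(0,1)}g\,d\mu\ge\tfrac1n\,\mu(\{g>\tfrac1n\})>0$. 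This gives strict monotonicity and completes the verification that $\phi_\mu$ is a strictly increasing solution of the \ignoreSpellCheck{MW}--problem.
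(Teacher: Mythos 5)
Your proof is correct and takes essentially the same route as the paper, which likewise obtains $\phi_\mu$ by integrating the pointwise properties of the de~Rham functions $\varphi_p$: continuity via Lebesgue's Dominated Convergence Theorem, monotonicity from monotonicity of $\Theta(p,\cdot)$, and the functional equation and boundary conditions left as ``easy to check.'' Your careful treatment of strict monotonicity --- noting that the sets $\{p\in(0,1) : \varphi_p(y)-\varphi_p(x)>\tfrac1n\}$ exhaust the full-measure set $(0,1)$, so the integral of the strictly positive integrand is strictly positive --- fills in a detail the paper asserts without argument, but it is a refinement of the same approach rather than a different one.
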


In 2021, during the talk on the \ignoreSpellCheck{MW}--problem, given by the first author at the Probabilistic Seminar at the Faculty of Mathematics and Information Sciences of the Warsaw University of Technology, the following question was asked.

\begin{problem}\label{prob:12}
Is every solution of the \ignoreSpellCheck{MW}--problem of the form \eqref{eq:int} with some $\mu\in\mathcal M_{(0,1)}$?
\end{problem}

The goal of this paper is to give a negative answer to \cref{prob:12}.
We first prove that there exist solutions of the \ignoreSpellCheck{MW}--problem that are constant on some intervals, which in view of \cref{pp:11} cannot be written in the integral form \eqref{eq:int} with some $\mu\in\mathcal M_{(0,1)}$. Then, we also prove that there are strictly increasing solutions not of that integral form. Moreover, before formulating the two announced results, we make a comment about a connection between solutions of the \ignoreSpellCheck{MW}--problem that are of the integral form \eqref{eq:int} with some $\mu\in\mathcal M_{(0,1)}$ and the moment problem.


\renewcommand{\theequation}{2.\arabic{equation}}\setcounter{equation}{0}
\section{Solutions of the integral form \eqref{eq:int}}\label{S:2}
Note that for each $\mu\in\mathcal M_{(0,1)}$, we have by \eqref{eq:13}
\begin{equation*}
\left(\phi_\mu\left(\frac{1}{2^n}\right)\right)_{n\in\mathbb N_0}
=\left(\int_{(0,1)}p^n\,d\mu(p)\right)_{n\in\mathbb N_0}.
\end{equation*}
We first turn our attention towards a characterization of all sequences that can appear on the right-hand side of the above equality.

If $\mu\in\mathcal M$, then the sequence $(c_j)_{j\in\mathbb N_0}$ given by
\begin{equation*}
c_j=\int_{[0,1]}x^j\,d\mu(x)
\end{equation*}
is said to be the \emph{moment sequence} of $\mu$. 

The following problem is called the \emph{Hausdorff moment problem}: Given a sequence $(c_j)_{j\in\mathbb N_0}$ of real numbers, we ask: when does there exist a $\mu\in\mathcal M$ such that $(c_j)_{j\in\mathbb N_0}$ is the moment sequence of $\mu$?
This particular moment problem is one of a large class of general moment problems (see e.g.\ \cite{Schmudgen17}, \cite{Feller66}). Hausdorff's name is added to this particular moment problem because Hausdorff solved it completely (see \cite{Hausdorff23}, cf.\ \cite{Hausdorff21a,Hausdorff21b}). To formulate Hausdorff's result, we need one more definition.
A sequence $(c_j)_{j\in\mathbb N_0}$ of real numbers is said to be \emph{completely monotone}, if 
\begin{equation*}
\sum_{j=0}^n(-1)^j\binom{n}{j}c_{k+j}\geq 0\quad\hbox{for }k,n\in\mathbb N_0.
\end{equation*}

\begin{theorem}[see {\cite[VII.3, Theorem 1]{Feller66} or \cite[Theorem 3.15]{Schmudgen17}}]\label{th:21}\ 
\begin{enumerate}[label=$(\roman*)$]
\item\label{th:211} Every moment sequence $(c_j)_{j\in\mathbb N_0}$ of $\mu\in\mathcal M$ is completely monotone with $c_0=1$.
\item Every completely monotone sequence $(c_j)_{j\in\mathbb N_0}$ with $c_0=1$ coincides with the 
moment sequence of a unique $\mu\in\mathcal M$.
\end{enumerate}
\end{theorem}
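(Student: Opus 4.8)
The first assertion is a one-line computation. If $(c_j)_{j\in\mathbb N_0}$ is the moment sequence of $\mu\in\mathcal M$, then $c_0=\int_{[0,1]}1\,d\mu=\mu([0,1])=1$, and by the binomial theorem
\[
\sum_{j=0}^n(-1)^j\binom nj c_{k+j}=\int_{[0,1]}x^k\sum_{j=0}^n\binom nj(-x)^j\,d\mu(x)=\int_{[0,1]}x^k(1-x)^n\,d\mu(x)\ge 0,
\]
since the integrand is nonnegative on $[0,1]$; thus $(c_j)_{j\in\mathbb N_0}$ is completely monotone with $c_0=1$, which proves (i).

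For the converse the plan is to realise the measure as a weak limit of explicit discrete measures. Writing $\nabla^m c_k=\sum_{j=0}^m(-1)^j\binom mj c_{k+j}$ for the quantities appearing in the definition of complete monotonicity, I would set
\[
\mu_n=\sum_{k=0}^n\binom nk\,\nabla^{\,n-k}c_k\;\delta_{k/n},
\]
a finite combination of point masses on the equally spaced grid $\{0,\tfrac1n,\dots,1\}$. Complete monotonicity says exactly that every coefficient $\binom nk\nabla^{\,n-k}c_k$ is nonnegative, and this is precisely where the full strength of the hypothesis (for all $k$ and $n$, not merely monotonicity of $(c_j)$) is used. The finite-difference inversion identity $\sum_{k=0}^N\binom Nk\nabla^{\,N-k}a_k=a_0$, valid for any sequence $(a_k)$ and proved by interchanging the two finite sums and applying $\binom Nk\binom{N-k}{m-k}=\binom Nm\binom mk$ together with $\sum_{k=0}^m(-1)^{m-k}\binom mk=0$ for $m\ge1$, then shows that the total mass of $\mu_n$ equals $c_0=1$. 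Hence each $\mu_n$ lies in $\mathcal M$, and since $[0,1]$ is compact, Helly's selection theorem (equivalently, weak-$*$ compactness of $\mathcal M$) yields a subsequence $\mu_{n_l}$ converging weakly to some $\mu\in\mathcal M$.

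The technical heart is to verify that $\mu$ has the prescribed moments, for which it suffices to show $\int_{[0,1]}x^j\,d\mu_n\to c_j$ for each fixed $j$, because $x\mapsto x^j$ is continuous and weak convergence then gives $\int x^j\,d\mu=\lim_l\int x^j\,d\mu_{n_l}$. Here I would compute not the ordinary moments but the falling-factorial moments: applying the same inversion identity to the shifted sequence $(c_{k+j})_k$, together with $(k)_j\binom nk=(n)_j\binom{n-j}{k-j}$, one obtains the clean exact identity
\[
\sum_{k=0}^n(k)_j\,\binom nk\,\nabla^{\,n-k}c_k=(n)_j\,c_j,\qquad (k)_j:=k(k-1)\cdots(k-j+1).
\]
Dividing by $n^j$ gives $\int_{[0,1]}x^j\,d\mu_n=\tfrac{(n)_j}{n^j}\,c_j+E_n$, where $E_n$ collects the contribution of $k^j-(k)_j$, a polynomial in $k$ of degree at most $j-1$; since every moment of the probability measure $\mu_n$ lies in $[0,1]$, each resulting lower-order term carries a factor $n^{-(j-i)}$ with $j-i\ge1$ and hence tends to $0$. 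As $\tfrac{(n)_j}{n^j}\to1$, this yields $\int x^j\,d\mu_n\to c_j$, whence $\int x^j\,d\mu=c_j$ and existence follows. I expect this moment-convergence step to be the only genuinely delicate point; the rest is either a short computation or a standard compactness argument.

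Finally, for uniqueness suppose $\mu,\nu\in\mathcal M$ share the moment sequence $(c_j)_{j\in\mathbb N_0}$. Then $\int p\,d\mu=\int p\,d\nu$ for every polynomial $p$, and since by the Weierstrass approximation theorem polynomials are uniformly dense in $C([0,1])$, this extends to $\int f\,d\mu=\int f\,d\nu$ for all $f\in C([0,1])$; a finite Borel measure on the compact metric space $[0,1]$ being determined by its integrals against continuous functions, we conclude $\mu=\nu$, which completes the proof of (ii).
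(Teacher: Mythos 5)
Your proof is correct. Note that the paper itself offers no proof of this statement --- it is quoted as a known classical result with citations to Feller and Schm\"udgen --- so there is no internal argument to compare against; what you have written is, in essence, the classical existence proof from the cited Feller reference: discrete approximants $\mu_n$ with nonnegative weights $\binom{n}{k}\nabla^{\,n-k}c_k$ at the grid points $k/n$, total mass $c_0=1$ via the finite-difference inversion identity, weak-$*$ compactness of $\mathcal M$, the falling-factorial moment identity $\sum_{k}(k)_j\binom{n}{k}\nabla^{\,n-k}c_k=(n)_jc_j$ to pass moments to the limit, and Weierstrass density plus the Riesz representation for uniqueness. All the steps check out, including the two combinatorial identities and the estimate $|E_n|\to 0$ (the lower-order coefficients are fixed Stirling numbers and each term is bounded by a constant times $n^{i-j}$ with $i<j$, using $0\le c_i\le c_0=1$), so the argument is complete as it stands.
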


The following two observations show a direct connection between solutions of the \ignoreSpellCheck{MW}--problem that are of the form \eqref{eq:int} with a $\mu\in\mathcal M_{(0,1)}$ and the Hausdorff moment problem.

\begin{prop}\label{pp:22}
Assume that $\mu\in\mathcal M_{(0,1)}$ and let $\phi_\mu\colon [0,1]\to [0,1]$ be given by \eqref{eq:int}. Then the sequence $(\phi_\mu(\frac{1}{2^j}))_{j\in\mathbb N_0}$ is completely monotone and
\begin{equation*}
\lim_{n\to\infty}\sum_{j=0}^{n}(-1)^j\binom{n}{j}\phi_\mu\left(\frac{1}{2^{j}}\right)=0.
\end{equation*}
\end{prop}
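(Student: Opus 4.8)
The plan is to observe that $(\phi_\mu(\frac{1}{2^j}))_{j\in\mathbb N_0}$ is nothing but the moment sequence of $\mu$, so that complete monotonicity is handed to us directly by \cref{th:21}, and then to evaluate the alternating binomial sum in closed form via the binomial theorem before letting $n\to\infty$.

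First I would combine \eqref{eq:13} with the defining formula \eqref{eq:int} to obtain
\begin{equation*}
\phi_\mu\left(\frac{1}{2^j}\right)=\int_{(0,1)}p^j\,d\mu(p)\quad\text{for }j\in\mathbb N_0.
\end{equation*}
Because $\mu(\{0,1\})=0$, the right-hand side equals $\int_{[0,1]}p^j\,d\mu(p)$, i.e.\ the $j$-th term of the moment sequence of $\mu$ regarded as an element of $\mathcal M$. The first part of \cref{th:21} then immediately yields that $(\phi_\mu(\frac{1}{2^j}))_{j\in\mathbb N_0}$ is completely monotone, with no further computation required.

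For the limit, I would push the finite sum inside the integral and collapse it with the binomial theorem:
\begin{equation*}
\sum_{j=0}^{n}(-1)^j\binom{n}{j}\phi_\mu\left(\frac{1}{2^{j}}\right)
=\int_{(0,1)}\sum_{j=0}^{n}\binom{n}{j}(-p)^j\,d\mu(p)
=\int_{(0,1)}(1-p)^n\,d\mu(p).
\end{equation*}
Since $0<1-p<1$ for every $p\in(0,1)$, the integrand converges pointwise to $0$ on $(0,1)$ and is dominated by the constant function $1$, which is $\mu$-integrable. Lebesgue's Dominated Convergence Theorem then forces the limit of these integrals to be $0$, as claimed.

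There is essentially no hard step here; the one point deserving care — and the place where the hypothesis $\mu\in\mathcal M_{(0,1)}$ is genuinely used — is the pointwise convergence $(1-p)^n\to 0$, which breaks down at $p=0$. Thus the assumption $\mu(\{0\})=0$ is precisely what makes the second assertion true.
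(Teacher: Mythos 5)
Your proposal is correct. The second half (the limit) is exactly the paper's argument: push the finite sum through the integral, collapse it with the binomial theorem to $\int_{(0,1)}(1-p)^n\,d\mu(p)$, and apply dominated convergence; your remark that the hypothesis $\mu(\{0\})=0$ is what rescues pointwise convergence is accurate and is implicitly the same point the paper relies on. Where you diverge is the first half: you identify $(\phi_\mu(\frac{1}{2^j}))_{j\in\mathbb N_0}$ as the moment sequence of $\mu$ (using $\mu(\{0,1\})=0$ to pass from $(0,1)$ to $[0,1]$) and then quote assertion \ref{th:211} of \cref{th:21}, whereas the paper proves complete monotonicity directly: for fixed $k,n$ it computes
\begin{equation*}
\sum_{j=0}^{n}(-1)^j\binom{n}{j}\varphi_p\left(\frac{1}{2^{k+j}}\right)=p^k(1-p)^n
\end{equation*}
via \eqref{eq:13}, integrates, and concludes that every finite difference equals $\int_{(0,1)}p^k(1-p)^n\,d\mu(p)\geq 0$. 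The two routes buy slightly different things: yours is shorter and reuses machinery the paper has already stated, while the paper's computation is self-contained (it is, in effect, an inline proof of the easy direction of Hausdorff's theorem) and exhibits the explicit nonnegative value of each difference, which also hands over the integrand $(1-p)^n$ needed for the limit step without any extra work. Both are complete and rigorous.
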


\begin{proof}
Fix $k,n\in\mathbb N_0$. By \eqref{eq:13}, we have
\begin{equation*}
\sum_{j=0}^{n}(-1)^j\binom{n}{j}\varphi_p\left(\frac{1}{2^{k+j}}\right)
=p^k\sum_{j=0}^{n}(-1)^j\binom{n}{j}p^j=p^k(1-p)^n
\end{equation*}
for every $p\in(0,1)$. Then 
\begin{align*}
\sum_{j=0}^{n}(-1)^j\binom{n}{j}\phi_\mu\left(\frac{1}{2^{j+k}}\right)
&=\int_{(0,1)}\sum_{j=0}^{n}(-1)^j\binom{n}{j}\Phi\left(p,\frac{1}{2^{j+k}}\right)\,d\mu(p)\\
&=\int_{(0,1)}p^k(1-p)^n\,d\mu(p)\geq 0,
\end{align*}
and according to Lebesgue's Dominated Convergence Theorem we get
\begin{align*}
\lim_{n\to\infty}\sum_{j=0}^{n}(-1)^j\binom{n}{j}\phi_\mu\left(\frac{1}{2^{j}}\right)
=\lim_{n\to\infty}\int_{(0,1)}(1-p)^n\,d\mu(p)=0,
\end{align*}
which completes the proof.
\end{proof}

\begin{prop}\label{pp:23}
Assume that $(c_j)_{j\in\mathbb N_0}$ is a completely monotone sequence with $c_0=1$ that decreases to $0$. If 
\begin{equation*}
\lim_{n\to\infty}\sum_{j=0}^{n}(-1)^j\binom{n}{j}c_{j}=0,
\end{equation*}
then there exists a unique $\mu\in\mathcal M_{(0,1)}$ such that $\phi_\mu\colon[0,1]\to[0,1]$ given by~\eqref{eq:int} is a solution of the \ignoreSpellCheck{MW}--problem with
\begin{equation*}
\phi_\mu\left(\frac{1}{2^j}\right)=c_j\quad\text{for }j\in\mathbb N_0.
\end{equation*}
\end{prop}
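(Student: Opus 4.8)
The plan is to apply \cref{th:21} to recover a measure from the given moment-type data, then verify that the measure avoids the endpoints and that the resulting $\phi_\mu$ matches the prescribed values. Since $(c_j)_{j\in\mathbb N_0}$ is completely monotone with $c_0=1$, part~\ref{th:211} of \cref{th:21} (in its converse form) guarantees a unique $\mu\in\mathcal M$ whose moment sequence is $(c_j)_{j\in\mathbb N_0}$, that is, $c_j=\int_{[0,1]}x^j\,d\mu(x)$ for all $j$. The task then splits into three checks: first, that $\mu$ is concentrated on $(0,1)$, i.e.\ $\mu(\{0,1\})=0$, so that $\mu\in\mathcal M_{(0,1)}$ and \cref{pp:11} applies; second, that with this $\mu$ one has $\phi_\mu(\tfrac{1}{2^j})=c_j$; and third, that $\phi_\mu$ is genuinely a solution of the \ignoreSpellCheck{MW}--problem, which is immediate once $\mu\in\mathcal M_{(0,1)}$ by \cref{pp:11}. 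Uniqueness of $\mu$ in $\mathcal M_{(0,1)}$ will follow from uniqueness in $\mathcal M$ together with the concentration statement.

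The identity $\phi_\mu(\tfrac{1}{2^j})=c_j$ is the easy part: by \eqref{eq:13} and \eqref{eq:int} we have $\phi_\mu(\tfrac{1}{2^j})=\int_{(0,1)}\varphi_p(\tfrac{1}{2^j})\,d\mu(p)=\int_{(0,1)}p^j\,d\mu(p)$, and since $\mu(\{0,1\})=0$ this integral equals $\int_{[0,1]}p^j\,d\mu(p)=c_j$. So the real content is the concentration claim, and this is where the two hypotheses about limits and monotonicity enter. To see $\mu(\{1\})=0$, I would use that $\mu(\{1\})=\lim_{j\to\infty}\int_{[0,1]}x^j\,d\mu(x)=\lim_{j\to\infty}c_j$, which is $0$ by the assumption that $(c_j)$ decreases to $0$; the limit identity itself is a routine consequence of monotone (or dominated) convergence, since $x^j\downarrow \mathbf 1_{\{1\}}(x)$ pointwise on $[0,1]$. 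To see $\mu(\{0\})=0$, I would exploit the finite-difference hypothesis: the quantity $\sum_{j=0}^n(-1)^j\binom{n}{j}c_j$ equals $\int_{[0,1]}(1-x)^n\,d\mu(x)$ by the binomial theorem, and since $(1-x)^n\downarrow \mathbf 1_{\{0\}}(x)$ on $[0,1]$, dominated convergence gives $\lim_{n\to\infty}\sum_{j=0}^n(-1)^j\binom{n}{j}c_j=\mu(\{0\})$; the assumption that this limit is $0$ then forces $\mu(\{0\})=0$.

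The main obstacle, such as it is, lies in the endpoint-concentration argument rather than in anything deep: one must correctly identify the two limiting integrands $x^j$ and $(1-x)^n$ with the indicator functions of the endpoints and justify passing the limit inside the integral. The binomial computation $\sum_{j=0}^n(-1)^j\binom{n}{j}c_j=\int_{[0,1]}\sum_{j=0}^n(-1)^j\binom{n}{j}x^j\,d\mu(x)=\int_{[0,1]}(1-x)^n\,d\mu(x)$ is the mirror image of the calculation already carried out in the proof of \cref{pp:22}, so it should go through cleanly; the decreasing monotonicity of $(c_j)$ and the normalization $c_0=1$ are exactly what make the monotone/dominated convergence steps legitimate. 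Once both endpoints carry zero mass, $\mu\in\mathcal M_{(0,1)}$, and combining \cref{pp:11} with the value identity $\phi_\mu(\tfrac{1}{2^j})=c_j$ and the uniqueness from \cref{th:21} completes the argument.
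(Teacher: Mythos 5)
Your proposal is correct and takes essentially the same approach as the paper: invoke \cref{th:21}(ii) (which is what you call the ``converse form'' of part (i)) to get the unique $\mu\in\mathcal M$ with moment sequence $(c_j)_{j\in\mathbb N_0}$, show the endpoints carry no mass using the two hypotheses together with the binomial identity and dominated convergence, and then conclude via \cref{pp:11} and \eqref{eq:13}. The only cosmetic difference is that you identify $\lim_{n\to\infty}\sum_{j=0}^{n}(-1)^j\binom{n}{j}c_j=\lim_{n\to\infty}\int_{[0,1]}(1-x)^n\,d\mu(x)=\mu(\{0\})$ directly, whereas the paper equivalently establishes $\mu((0,1))=1$ by computing $\lim_{n\to\infty}\int_{(0,1)}\bigl(1-(1-p)^n\bigr)\,d\mu(p)$ after first removing the endpoint $p=1$.
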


\begin{proof}
By assertion (ii) of \cref{th:21} there exists a unique $\mu\in\mathcal M$ such that  $(c_j)_{j\in\mathbb N_0}$ is the moment sequence of $\mu$.
	
We first note that 
\begin{equation*}
0=\lim_{j\to\infty}c_j=\lim_{j\to\infty}\int_{[0,1]}x^j\,d\mu(x)=\mu(\{1\}).
\end{equation*}
This jointly with Lebesgue's Dominated Convergence Theorem yields
\begin{align*}
1&=c_0=\lim_{n\to\infty}\sum_{j=1}^{n}(-1)^{j+1}\binom{n}{j}c_j
=\lim_{n\to\infty}\int_{[0,1]}\sum_{j=1}^{n}(-1)^{j+1}\binom{n}{j}p^j\,d\mu(p)\\
&=\lim_{n\to\infty}\int_{(0,1)}\sum_{j=1}^{n}(-1)^{j+1}\binom{n}{j}p^j\,d\mu(p)
=\lim_{n\to\infty}\int_{(0,1)}\big(1-(1-p)^n\big)\,d\mu(p)\\
&=\int_{(0,1)}1\,d\mu(p)=\mu((0,1)).
\end{align*}
Therefore, $\mu\in\mathcal M_{(0,1)}$.

From \cref{pp:11} we see that the formula \eqref{eq:int} defines a solution of the \ignoreSpellCheck{MW}--problem. By \eqref{eq:13}, we have
\begin{equation*}
c_j=\int_{(0,1)}p^j\,d\mu(p)=\int_{(0,1)}\varphi_p\left(\frac{1}{2^j}\right)\,d\mu(p)
=\phi_\mu\left(\frac{1}{2^j}\right)
\end{equation*}
for every $j\in\mathbb N_0$.
\end{proof}

Let $\varphi\colon[0,1]\to[0,1]$ be a solution of the \ignoreSpellCheck{MW}--problem. 

Assume first that there exists $\mu\in\mathcal M_{(0,1)}$ such that $\varphi=\phi_\mu$. \Cref{pp:22} implies that the sequence $(\varphi(\frac{1}{2^j}))_{j\in\mathbb N_0}$ is completely monotone and 	
\begin{equation}\label{eq:21}
\lim_{n\to\infty}\sum_{j=0}^{n}(-1)^j\binom{n}{j}\varphi\left(\frac{1}{2^{j}}\right)=0.
\end{equation}

Assume now that the sequence $(\varphi(\frac{1}{2^j}))_{j\in\mathbb N_0}$ is completely monotone and \eqref{eq:21} holds. \cref{pp:23} yields the existence of $\mu\in\mathcal M_{(0,1)}$ such that $\phi_\mu\colon[0,1]\to[0,1]$ given by \eqref{eq:int} is a solution of the \ignoreSpellCheck{MW}--problem and
\begin{equation*}
\phi_\mu\left(\frac{1}{2^j}\right)=\varphi\left(\frac{1}{2^j}\right)\quad\text{for }j\in\mathbb N_0.
\end{equation*}
Since both the functions $\varphi$ and $\phi_\mu$ are solutions of the \ignoreSpellCheck{MW}--problem, we also have $\phi_\mu(0)=\varphi(0)$. The question is: Does $\varphi=\phi_\mu$ hold? 
This leads to the following question about a possible characterization of solutions  of the MW--problem that are of integral form \eqref{eq:int}.
\begin{problem}\label{prob:24}
Is a solution $\varphi\colon[0,1]\to[0,1]$ of the MW--problem of the form~\eqref{eq:int} with some $\mathcal M_{(0,1)}$ if and only if the sequence $(\varphi(\frac{1}{2^j}))_{j\in\mathbb N_0}$ is completely monotone and \eqref{eq:21} hold?
\end{problem}
Note that if $\varphi\colon[0,1]\to[0,1]$ is a solution of the MW--problem, then by its monotonicity we have $\varphi(\frac{1}{2^k})\geq\varphi(0)=0$ as well as $\varphi(\frac{1}{2^{k}})
-\varphi(\frac{1}{2^{k+1}})\geq 0$, and finally
\begin{align*}
\varphi\left(\frac{1}{2^{k}}\right)
-2\varphi\left(\frac{1}{2^{k+1}}\right)+\varphi\left(\frac{1}{2^{k+2}}\right)
=\varphi\left(\frac{1}{2^{k+1}}+\frac{1}{2}\right)
	-\varphi\left(\frac{1}{2^{k+2}}+\frac{1}{2}\right)\geq 0
\end{align*}
for every $k\in\mathbb N_0$.


\renewcommand{\theequation}{3.\arabic{equation}}\setcounter{equation}{0}
\section{Answer to \texorpdfstring{\Cref{prob:12}}{Problem~\ref{prob:12}}}\label{S:3}
In this section we give a negative answer to \cref{prob:12} formulating two results. The first one concerns solutions of the \ignoreSpellCheck{MW}--problem in the class of Cantor-like functions, whereas the second result is devoted to solutions of the \ignoreSpellCheck{MW}--problem that are strictly increasing but not of the form \eqref{eq:int}.

Given $m\in\mathbb N$ we put
\begin{equation*}
\mathcal P_m=\left\{(p_0,\ldots, p_{2^m-1})\,\Big|\, p_0,\ldots,p_{2^m-1}\in[0,1)\text{ with } \sum_{k=0}^{2^m-1}p_k=1\right\}
\end{equation*}
and $K=\{0,1,\ldots,2^m-1\}$. For each $k\in K$ define $f_k\colon[0,1]\to[0,1]$ by $f_k(x)=\frac{x+k}{2^m}$ and set $\mathcal F=\big\{f_k\,|\,k\in K\big\}$. The pair $(\mathcal F,P)$ is an \emph{iterated function system with probabilities} (\ignoreSpellCheck{IFSwP} for short). According to \cite[Section~4.4]{Hutchinson81} (cf.\ \cite[Theorem~2.8]{Falconer97}) there exists the unique $\mu_P\in\mathcal M$ satisfying
\begin{equation}\label{eq:inv}
\mu_P(B)=\sum_{k\in K}p_k\mu_P(f_k^{-1}(B))\quad\text{for }B\in\mathcal B([0,1]),
\end{equation}
where $\mathcal B([0,1])$ denotes the family of all Borel subsets of the interval $[0,1]$. Denote by $\Phi_P\colon[0,1]\to[0,1]$ the probability distribution function (pd.\ function for short) of
the unique $\mu_P\in\mathcal M$ satisfying~\eqref{eq:inv}, i.e. $\Phi_P$ and $\mu_P$ are related by the formula
\begin{equation}\label{eq:varP}
\Phi_P(x)=\mu_P([0,x])\quad\text{for }x\in[0,1].
\end{equation}
From now on, for any $P\in\mathcal P_m$ the symbols $\mu_P$ and $\Phi_P$ are fixed for $\mu_P\in\mathcal M$ satisfying \eqref{eq:inv} and its pd.\ function, respectively. 

Note that for $m=1$ we have $\mathcal P_1=\{(p,1-p)\,|\,p\in(0,1)\}$, and according to \cite{MorawiecZurcher18} we conclude that for every $p\in(0,1)$ the \ignoreSpellCheck{de~Rham} function $\varphi_p$ coincides with the function $\Phi_{(p,1-p)}$.
We put
\begin{equation}\label{MW1}
MW_1=\{\varphi_p\,|\,p\in(0,1)\}
\end{equation}
(later in~\eqref{MWp}, we will look at~$WM_m$ for all natural numbers $m>1$).
The family $MW_1$ consists of strictly increasing solutions of the \ignoreSpellCheck{MW}--problem, and it is the base for producing new solutions of the \ignoreSpellCheck{MW}--problem, as described in \cref{S:1}.

Given $P=(p_0,\ldots,p_{2^m-1})\in\mathcal P_m$ we put $K_P=\{k\in K\,|\,p_k\neq 0\}$ and note that $K_P$ contains at least two elements, then \eqref{eq:inv} is equivalent to
\begin{equation}\label{eq:invK}
\mu_P(B)=\sum_{k\in K_P}p_k\mu_P(f_k^{-1}(B))\quad\text{for }B\in\mathcal B([0,1]),
\end{equation}
and $K_P=K$ for every $P\in\mathcal P_1$. 
Put
\begin{equation*}
A_0=[0,1]\quad\text{and}\quad A_n=\bigcup_{k\in K_P}f_k(A_{n-1})\quad\text{for }n\in\mathbb N,
\end{equation*}
and define the \emph{attractor} of the considered \ignoreSpellCheck{IFSwP} by
\begin{equation*}
A_*=\bigcap_{n\in\mathbb N}A_n
\end{equation*}
(see \cite[Section~3.7, Definition~2]{Barnsley88}). It is clear that $A_*$ is a compact set and $A_*=\bigcup_{k\in K_P}f_k(A_*)$. By \cite[Chapter~9.6, Theorem~2]{Barnsley88}, $A_*$ is the support of~$\mu_P$. 
According to \cite[Section~3]{MorawiecZurcher19}, $A^*$ is an uncountable perfect subset of~$\mathbb R$, and if moreover $K_P$ is a proper subset of the set $K$, then $A_*$ is a \emph{Cantor-like set}, i.e.\ uncountable,
perfect, nowhere dense (see \cite[Part~II, Section 1]{WiseHell93}) and of Lebesgue measure zero. Furthermore, from the above construction we have
\begin{equation*}
A_*=\bigcap_{n\in\mathbb N}\left(\bigcup_{k_1,\ldots,k_n\in K_P}
\big[(f_{k_1}\circ\dots\circ f_{k_n})(0),(f_{k_1}\circ\dots\circ f_{k_n})(1)\big]\right),
\end{equation*}
and since an easy induction yields $|(f_{k_1}\circ\dots\circ f_{k_n})(1)-(f_{k_1}\circ\dots\circ f_{k_n})(0)|\leq\frac{1}{2^{nm}}$ for all $k_1,\ldots,k_n\in K_P$ and $n\in\mathbb N$, it follows that $x\in A_*$ if and only if there exists a sequence $(k_n)_{n\in\mathbb N}\in K_P^{\mathbb N}$, called the \emph{address} of $x$ (see~\cite[Section~4.2, Theorem~1 and Definition~2]{Barnsley88}), such that 
\begin{equation*}
x=\lim_{n\to\infty}f_{k_1,\ldots,k_n}(0)=\lim_{n\to\infty}f_{k_1,\ldots,k_n}(1).
\end{equation*}


\subsection{The class of Cantor-like solutions of the \ignoreSpellCheck{MW}--problem}
Throughout this subsection we fix a natural number $m\geq 2$ and $P\in\mathcal P_m$.

The first result is a consequence of \cite[Lemma~4.3]{MorawiecZurcher19}.
We do not repeat the proof but only point out that the statement is a consequence of the fact that $A_*$ is the support of~$\mu_P$ (see \cite[Chapter~9.6, Theorem~2]{Barnsley88}) and the points in~$A_*$ have addresses.

\begin{lemma}\label{lm:31}
For any $x\in [0,1]$ we have $\mu_P(\{x\})=0$. In particular, $\mu_P\in\mathcal M_{(0,1)}$ and its pd.\ function $\Phi_P$ is continuous. 
\end{lemma}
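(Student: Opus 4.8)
The plan is to establish the two claimed properties in sequence: first that $\mu_P$ has no atoms (i.e.\ $\mu_P(\{x\})=0$ for every $x$), then deduce the two stated consequences. For the atomlessness, I would argue as follows. Suppose toward a contradiction that $\mu_P(\{x\})=\alpha>0$ for some $x\in[0,1]$. Using the self-similarity relation \eqref{eq:invK}, namely $\mu_P(\{x\})=\sum_{k\in K_P}p_k\mu_P(f_k^{-1}(\{x\}))$, and observing that each $f_k$ is injective so that $f_k^{-1}(\{x\})$ is either empty or a single point, I would track how atoms propagate. The key point is that the maps $f_k$ for distinct $k\in K_P$ have disjoint images (their ranges are the dyadic-type subintervals $[\frac{k}{2^m},\frac{k+1}{2^m}]$, overlapping only at endpoints), so a given point $x$ lies in the image of at most one $f_k$ up to the shared endpoints; pulling back an atom of mass $\alpha$ through $f_k$ yields an atom of mass at least $\alpha/p_k > \alpha$ at the preimage point. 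Iterating, I obtain atoms of strictly increasing mass, which is impossible for a probability measure. The cleaner route, and the one I would actually write, is to use the address structure recorded in the excerpt: every point of the support $A_*$ has an address $(k_n)\in K_P^{\mathbb N}$, and since $K_P$ has at least two elements, one shows each singleton is hit with the full product weight $\prod_n p_{k_n}=0$ (as each $p_k<1$ forces the product to vanish), so no single point carries positive mass.

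Once atomlessness is in hand, the two ``in particular'' claims are immediate. First, $\mu_P\in\mathcal M_{(0,1)}$ means $\mu_P(\{0,1\})=0$, which follows directly from $\mu_P(\{0\})=\mu_P(\{1\})=0$, a special case of what was just proved. Second, the continuity of $\Phi_P$: for a distribution function $\Phi_P(x)=\mu_P([0,x])$, the only possible discontinuities are jumps, and a jump at $x_0$ of size equal to $\mu_P(\{x_0\})$ occurs precisely when $x_0$ is an atom. Since there are no atoms, $\Phi_P$ has no jumps; being monotone and jump-free on $[0,1]$, it is continuous. I would state this last implication in one or two sentences, citing the standard fact that a monotone function's discontinuities are jumps whose sizes are the point masses of the associated measure.

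The main obstacle, and the part deserving the most care, is the propagation argument for the atom. The subtlety is the \emph{overlap at endpoints}: the images $f_k(A_*)$ are not quite disjoint, since consecutive subintervals share a common endpoint, and a point like $x=\frac{k}{2^m}$ could in principle be the image of both $f_{k-1}(1)$ and $f_k(0)$. Handling these boundary points correctly is exactly why the address formalism is convenient, since it encodes which cylinder sets a point belongs to and makes the bookkeeping of masses precise. The excerpt sidesteps this by invoking \cite[Lemma~4.3]{MorawiecZurcher19} and declining to repeat the proof, instead pointing to the two structural facts—that $A_*$ is the support of $\mu_P$ and that every point of $A_*$ has an address—as the conceptual core. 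My plan would mirror this: reduce the atomlessness to the address structure, where the condition $p_k<1$ for all $k$ (built into the definition of $\mathcal P_m$) guarantees that the infinite product of transition probabilities along any single address vanishes, so that no point can retain positive mass.
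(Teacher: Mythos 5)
Your proposal is correct and follows essentially the same route as the paper, which likewise reduces the statement to the two facts that $A_*$ is the support of $\mu_P$ and that every point of $A_*$ has an address in $K_P^{\mathbb N}$, deferring the detailed bookkeeping (including the endpoint overlaps you rightly flag) to \cite[Lemma~4.3]{MorawiecZurcher19}. One small precision: the product $\prod_n p_{k_n}$ vanishes not merely because each $p_k<1$ (an infinite product of factors below $1$ can be positive), but because the factors range over the finite set $\{p_k\,|\,k\in K_P\}$ and are thus bounded above by $\max_{k\in K_P}p_k<1$, so the partial products decay geometrically.
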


Before formulating the main result of this subsection, we need the following fact.

\begin{prop}\label{pp:32}
The pd.\ function $\Phi_P$ is constant on each component of the set $[0,1]\setminus A_*$, and
\begin{equation}\label{eq:E}
\Phi_P(x)=\sum_{k\in K}\left[\Phi_P\left(\frac{x+k}{2^m}\right)
-\Phi_P\left(\frac{k}{2^m}\right)\right]\quad\text{for }x\in[0,1].
\end{equation}
\end{prop}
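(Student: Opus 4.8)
=== PROOF PROPOSAL ===

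\begin{proof}[Proof proposal]
The plan is to prove the two assertions separately, drawing on the self-similar structure of $\mu_P$ encoded in the invariance relation~\eqref{eq:invK} and the fact (from \cref{lm:31}) that $\Phi_P$ is continuous.

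First I would establish that $\Phi_P$ is constant on each component of $[0,1]\setminus A_*$. Since $A_*$ is the support of $\mu_P$ (by \cite[Chapter~9.6, Theorem~2]{Barnsley88}), the complement $[0,1]\setminus A_*$ is a $\mu_P$-null open set. Let $(a,b)$ be a connected component of this complement. Then $\mu_P((a,b))=0$, and because $\mu_P(\{x\})=0$ for every $x$ (by \cref{lm:31}), in fact $\mu_P([a,b])=0$. Hence for $x\in[a,b]$ we have $\Phi_P(x)=\mu_P([0,x])=\mu_P([0,a])+\mu_P((a,x])=\Phi_P(a)$, using $\mu_P((a,x])\le\mu_P((a,b))=0$. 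This shows $\Phi_P$ is constant on the closure of each such component, which is the desired statement.

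Next I would derive the functional equation~\eqref{eq:E}. The natural route is to rewrite $\Phi_P(x)=\mu_P([0,x])$ using the invariance~\eqref{eq:inv}, written over all of $K$ (the terms with $p_k=0$ contribute nothing, so one may work with~\eqref{eq:inv} rather than~\eqref{eq:invK}). Applying~\eqref{eq:inv} with $B=[0,x]$ gives $\Phi_P(x)=\sum_{k\in K}p_k\,\mu_P(f_k^{-1}([0,x]))$. The key computation is to identify $f_k^{-1}([0,x])$: since $f_k(t)=\frac{t+k}{2^m}$ maps $[0,1]$ onto the interval $[\frac{k}{2^m},\frac{k+1}{2^m}]$, the preimage $f_k^{-1}([0,x])$ is $[0,1]$ when $x\ge\frac{k+1}{2^m}$, is empty (in $[0,1]$) when $x<\frac{k}{2^m}$, and equals $[0,\,2^m x-k]$ for $x$ in the intermediate range. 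One then recognizes that $p_k\,\mu_P([0,2^m x-k])=p_k\,\Phi_P(2^m x-k)$ and, since $p_k=\mu_P([\frac{k}{2^m},\frac{k+1}{2^m}])=\Phi_P(\frac{k+1}{2^m})-\Phi_P(\frac{k}{2^m})$, assembles the sum into telescoping increments of $\Phi_P$. I expect the cleanest way to reach the exact right-hand side of~\eqref{eq:E} is to verify that both sides define monotone right-continuous functions agreeing at the dyadic grid points $\frac{k}{2^m}$ and having the same increments on each subinterval $[\frac{k}{2^m},\frac{k+1}{2^m}]$; the increment of the right-hand side of~\eqref{eq:E} on that subinterval is exactly $\Phi_P\big(\frac{x+k}{2^m}\big)-\Phi_P\big(\frac{k}{2^m}\big)$ evaluated as $x$ runs over $[0,1]$, which by self-similarity of $\mu_P$ equals $p_k\,\Phi_P$ composed with the rescaling, matching the mass $\mu_P$ assigns to $f_k([0,x])$.

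The main obstacle will be the careful bookkeeping in the second part: correctly describing $f_k^{-1}([0,x])$ across the three regimes and confirming that summing the restricted contributions reproduces~\eqref{eq:E} precisely, including the subtracted boundary terms $\Phi_P(\frac{k}{2^m})$. Continuity of $\Phi_P$ (\cref{lm:31}) removes any ambiguity at the breakpoints $\frac{k}{2^m}$, so the two expressions, agreeing on the dense dyadic set and on each subinterval, coincide everywhere by continuity.
\end{proof}
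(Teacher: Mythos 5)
Your proposal is correct, but it follows a genuinely different route from the paper's. For the first claim, the paper merely invokes the previously listed properties of $A_*$ (citing \cite{MorawiecZurcher19}), while you give a short self-contained argument from the support property and \cref{lm:31}; the only cosmetic point is that the components of $[0,1]\setminus A_*$ meeting $0$ or $1$ may be half-open intervals, which your argument handles verbatim. For \eqref{eq:E}, the paper proceeds in two steps: it first obtains the identity with the sum restricted to $K_P$ by arguing analogously to the proof of Theorem~3.3 in \cite{MorawiecZurcher18}, and then shows separately that each term with $k\in K\setminus K_P$ vanishes because $\Phi_P\left(\frac{k+1}{2^m}\right)=\Phi_P\left(\frac{k}{2^m}\right)$ for such $k$. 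You instead derive the identity over all of $K$ at once from the invariance relation \eqref{eq:inv}; your three-regime computation of the preimages $f_k^{-1}([0,\cdot])$ is exactly the paper's later proof of \cref{lm:42}, which yields $\Phi_P\left(\frac{x+k}{2^m}\right)-\Phi_P\left(\frac{k}{2^m}\right)=p_k\Phi_P(x)$ for every $k\in K$ (including those with $p_k=0$), whence \eqref{eq:E} follows by summing over $k$ and using $\sum_{k\in K}p_k=1$. Your route buys self-containedness and uniformity (no external citation, no case split between $K_P$ and $K\setminus K_P$, and it would make \cref{lm:42} available already at this stage); the paper's route buys brevity by reusing known results and isolates the geometric reason the zero-probability indices are harmless, namely that $\Phi_P$ is flat over the corresponding intervals. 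One tightening suggestion: instead of your final reassembly step (comparing monotone right-continuous functions agreeing on the dyadic grid), apply \eqref{eq:inv} directly to $B=\left[0,\frac{x+k}{2^m}\right]$ for fixed $k\in K$ and $x\in[0,1]$: the preimage under $f_l$ is $[0,1]$ for $l<k$, is $[0,x]$ for $l=k$, and is $\mu_P$-null for $l>k$ by \cref{lm:31}, which gives the key identity at once and removes any need for a density or continuity argument.
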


\begin{proof}
Using the properties of~$A_*$ that we have already listed (cf.\ \cite[Theorem~4.6]{MorawiecZurcher19}), we deduce that $\Phi_P$ is constant on each component of the set $[0,1]\setminus A_*$. 
To show that~\eqref{eq:E} holds, we first observe that arguing analogously to the proof of Theorem~3.3 in \cite{MorawiecZurcher18}, we have
\begin{equation}\label{eq:EK}
\Phi_P(x)=\sum_{k\in K_P}\left[\Phi_P\left(\frac{x+k}{2^m}\right)
-\Phi_P\left(\frac{k}{2^m}\right)\right]\quad\text{for }x\in[0,1].
\end{equation}
If $k\in K\setminus K_P$, then \eqref{eq:invK} and \cref{lm:31} yield
\begin{align*}
\Phi_P\left(\frac{k+1}{2^m}\right)&=\mu_P\left(\left[0,\frac{k}{2^m}\right]\right)
+\mu_P\left(\left(\frac{k}{2^m},\frac{k+1}{2^m}\right)\right)
+\mu_P\left(\left\{\frac{k+1}{2^m}\right\}\right)\\
&=\Phi_P\left(\frac{k}{2^m}\right)+\sum_{l\in K_P}p_{l}\mu_P(\emptyset)
=\Phi_P\left(\frac{k}{2^m}\right),
\end{align*}
which together with \eqref{eq:EK} gives \eqref{eq:E}.
\end{proof}

The main result of this subsection together with \cref{pp:11} gives a negative answer to \Cref{prob:12} and reads as follows.

\begin{theorem}\label{th:33}
The function $\varphi_P\colon[0,1]\to[0,1]$ defined by
\begin{equation}\label{eq:MW}
\varphi_P(x)=\frac{1}{m}\sum_{i=0}^{m-1}\sum_{k=0}^{2^i-1}
\left[\Phi_P\left(\frac{x+k}{2^i}\right)-\Phi_P\left(\frac{k}{2^i}\right)\right]
\end{equation}
is a solution of the \ignoreSpellCheck{MW}--problem. 

Moreover, if $K_P\neq K$, then $\varphi_P$ is not strictly increasing.
\end{theorem}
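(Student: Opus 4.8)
The plan is to verify the four defining properties of an MW--solution for $\varphi_P$ --- continuity, monotonicity, the boundary conditions, and the functional equation~\eqref{eq:e} --- and then to exhibit a nondegenerate interval on which $\varphi_P$ is constant whenever $K_P\neq K$. For the routine parts, observe that $\varphi_P$ is a finite sum of the functions $x\mapsto\Phi_P(\frac{x+k}{2^i})$, each continuous (by \cref{lm:31}) and non-decreasing in $x$ since $\Phi_P$ is a distribution function; hence $\varphi_P$ is continuous and increasing. Substituting $x=0$ makes every summand vanish, so $\varphi_P(0)=0$; substituting $x=1$ turns each inner sum over $k$ into a telescoping sum equal to $\Phi_P(1)-\Phi_P(0)=1$, so the average over $i$ gives $\varphi_P(1)=1$.

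The heart of the argument is~\eqref{eq:e}. I would introduce, for $0\le i\le m$, the auxiliary functions $g_i(x)=\sum_{k=0}^{2^i-1}[\Phi_P(\frac{x+k}{2^i})-\Phi_P(\frac{k}{2^i})]$, so that $\varphi_P=\frac1m\sum_{i=0}^{m-1}g_i$, and the central computation is the identity $g_i(\frac{x}{2})+g_i(\frac{x+1}{2})-g_i(\frac12)=g_{i+1}(x)$. This follows by inserting the definition of $g_i$, rewriting $\frac{x/2+k}{2^i}=\frac{x+2k}{2^{i+1}}$ and $\frac{(x+1)/2+k}{2^i}=\frac{x+1+2k}{2^{i+1}}$, and noting that the even offsets $2k$ and the odd offsets $2k+1$ (for $0\le k\le 2^i-1$) together exhaust $0,\dots,2^{i+1}-1$; the constant terms recombine in the same way into $\sum_{j=0}^{2^{i+1}-1}\Phi_P(\frac{j}{2^{i+1}})$. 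Since the MW--operator $f\mapsto f(\frac{\cdot}{2})+f(\frac{\cdot+1}{2})-f(\frac12)$ is linear, applying it to $\varphi_P$ and using this identity yields $\frac1m\sum_{i=1}^{m}g_i$, which equals $\varphi_P$ precisely because $g_0=\Phi_P$ (as $\Phi_P(0)=0$) and $g_m=\Phi_P$ (this is exactly~\eqref{eq:E}); the two boundary terms cancel and~\eqref{eq:e} follows.

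For the last assertion, suppose $K_P\neq K$. Then $A_*$ is Cantor-like, in particular closed and nowhere dense, and by \cref{pp:32} the function $\Phi_P$ is constant on each component of $[0,1]\setminus A_*$. Consider the set $E=\bigcup_{i=0}^{m-1}\bigcup_{k=0}^{2^i-1}\{x\in[0,1]:\frac{x+k}{2^i}\in A_*\}$; each member of this finite union is the preimage of $A_*$ under an affine homeomorphism, hence closed and nowhere dense, so $E$ is closed and nowhere dense and $[0,1]\setminus E$ contains a nondegenerate open interval $(\alpha,\beta)$. For every $x\in(\alpha,\beta)$ and every pair $(i,k)$ the point $\frac{x+k}{2^i}$ lies in $[0,1]\setminus A_*$, and since $\frac{(\alpha,\beta)+k}{2^i}$ is an interval it is contained in a single component of $[0,1]\setminus A_*$, on which $\Phi_P$ is constant. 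Thus every summand of $\varphi_P$ is constant on $(\alpha,\beta)$, so $\varphi_P$ is constant there and therefore not strictly increasing.

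I expect the main obstacle to be the bookkeeping in the identity $g_i(\frac{x}{2})+g_i(\frac{x+1}{2})-g_i(\frac12)=g_{i+1}(x)$: one must carefully match the dilated arguments and track which constant terms arise from $g_i(\frac{x}{2})$, $g_i(\frac{x+1}{2})$ and $-g_i(\frac12)$ so that they reassemble into the single normalization appearing in $g_{i+1}$.
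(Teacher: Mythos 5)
Your proposal is correct and takes essentially the same approach as the paper: your telescoping identity $g_i(\tfrac{x}{2})+g_i(\tfrac{x+1}{2})-g_i(\tfrac12)=g_{i+1}(x)$ combined with $g_0=g_m=\Phi_P$ (the latter being exactly \eqref{eq:E} from \cref{pp:32}) is precisely the paper's chain of equalities, repackaged through the auxiliary functions $g_i$. Likewise, your set $E$ is the same as the paper's set $A$ (up to intersecting with $[0,1]$), with nowhere density playing the role the paper assigns to Lebesgue measure zero in producing an interval of constancy.
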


\begin{proof}
Obviously, $\varphi_P(0)=0$. To see that $\varphi_P(1)=1$ it suffices to note that $\mu_P\in\mathcal M$ implies $\Phi_P(1)=1$. Since $\Phi_P$ is increasing, so is $\varphi_P$. The continuity of $\varphi_P$ follows from \cref{lm:31}. To see that $\varphi_P$ satisfies \eqref{eq:e} we fix $x\in[0,1]$ and observe that applying \eqref{eq:E}, which holds by \cref{pp:32}, we have
\begin{align*}
\varphi_P(x)&=\frac{1}{m}\left(\Phi_P(x)+\sum_{i=1}^{m-1}\sum_{k=0}^{2^i-1}\left[\Phi_P\left(\frac{x+k}{2^i}\right)-\Phi_P\left(\frac{k}{2^i}\right)\right]\right)\\
&=\frac{1}{m}\sum_{i=1}^{m}\sum_{k=0}^{2^i-1}\left[\Phi_P\left(\frac{x+k}{2^i}\right)-\Phi_P\left(\frac{k}{2^i}\right)\right]\\
&=\frac{1}{m}\sum_{i=0}^{m-1}\sum_{k=0}^{2^{i+1}-1}\left[\Phi_P\left(\frac{x+k}{2^{i+1}}\right)-\Phi_P\left(\frac{k}{2^{i+1}}\right)\right]\\
&=\frac{1}{m}\sum_{i=0}^{m-1}\sum_{k=0}^{2^{i}-1}\left[\Phi_P\left(\frac{x+2k}{2^{i+1}}\right)-\Phi_P\left(\frac{2k}{2^{i+1}}\right)\right]\\
&\quad+\frac{1}{m}\sum_{i=0}^{m-1}\sum_{k=0}^{2^{i}-1}\left[\Phi_P\left(\frac{x+2k+1}{2^{i+1}}\right)-\Phi_P\left(\frac{2k+1}{2^{i+1}}\right)\right]\\
&=\frac{1}{m}\sum_{i=0}^{m-1}\sum_{k=0}^{2^{i}-1}\left[\Phi_P\left(\frac{\frac{x}{2}+k}{2^{i}}\right)-\Phi_P\left(\frac{k}{2^{i}}\right)\right]\\
&\quad+\frac{1}{m}\sum_{i=0}^{m-1}\sum_{k=0}^{2^{i}-1}\left[\Phi_P\left(\frac{\frac{x+1}{2}+ k}{2^{i}}\right)-\Phi_P\left(\frac{2k+1}{2^{i+1}}\right)\right]\\
&=\varphi_P\left(\frac{x}{2}\right)+\frac{1}{m}\sum_{i=0}^{m-1}\sum_{k=0}^{2^{i}-1}\left[\Phi_P\left(\frac{\frac{x+1}{2}+ k}{2^{i}}\right)-\Phi_P\left(\frac{k}{2^{i}}\right)\right]\\
&\quad+\frac{1}{m}\sum_{i=0}^{m-1}\sum_{k=0}^{2^i-1}\left[\Phi_P\left(\frac{k}{2^i}\right)-\Phi_P\left(\frac{1+2k}{2^{i+1}}\right)\right]\\
&=\varphi_P\left(\frac{x}{2}\right)+\varphi_P\left(\frac{x+1}{2}\right)-\varphi_P\left(\frac{1}{2}\right).
\end{align*}

Finally, observe that if $K_P\neq K$, then $A_*$ is a Cantor-like set, and hence the set
\begin{equation*}
A=\bigcup_{i=0}^{m-1}\bigcup_{k=0}^{2^i-1}2^i\left(A_*-\frac{k}{2^i}\right)
\end{equation*}
is closed and of Lebesgue measure zero. Therefore, by \cref{pp:32} we conclude that $\varphi_P$ is constant on each component of the set $[0,1]\setminus A$, which means that it is not strictly increasing.
\end{proof} 

For each $P\in\mathcal P_m$, we denote by $\varphi_P$ the solution of the \ignoreSpellCheck{MW}--problem given by \eqref{eq:MW} and put 
\begin{equation}\label{MWp}
MW_m=\{\varphi_P\,|\,P\in\mathcal P_m\}.
\end{equation}
Let us recall that the family $MW_1$ was defined in~\eqref{MW1} and each of its member can be written in the form \eqref{eq:int}. However, \cref{th:33} says that the family $MW_m$ (recall that $m\geq 2$) contains solutions of the \ignoreSpellCheck{MW}--problem that are not strictly increasing and, in view of \cref{pp:11}, cannot have the form \eqref{eq:int} with any $\mu\in\mathcal M_{(0,1)}$.


\subsection{The class of strictly increasing solutions of the \ignoreSpellCheck{MW}--problem that are not of the form \eqref{eq:int}}
The aim of this subsection is to show that for each integer number $m\geq 2$ the family $MW_m$ contains solutions of the \ignoreSpellCheck{MW}--problem that are strictly increasing but not of the form \eqref{eq:int} with any $\mu\in\mathcal M_{(0,1)}$.

The first assertion of the next lemma can be found as Lemma~3.1 in~\cite{MorawiecZurcher21}.
The second assertion is a consequence of the first one and the fact that if $\varphi\colon[0,1]\to[0,1]$ is a solution of the \ignoreSpellCheck{MW}--problem, then so is the function $\psi\colon[0,1]\to[0,1]$ defined by $\psi(x)=1-\varphi(1-x)$.

\begin{lemma}\label{lm:34}
Let $\varphi$ be a solution of the \ignoreSpellCheck{MW}--problem.
\begin{enumerate}[label=(\roman*)]	
\item\label{D0} If $\liminf_{x\to 0+} \frac{\varphi(x)}{x}=0$, then $\varphi$ is singular.
\item\label{D1} If $\liminf_{x\to 1-} \frac{\varphi(1)-\varphi(x)}{1-x}=0$, then $\varphi$ is singular.
\end{enumerate}
\end{lemma}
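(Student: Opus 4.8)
The plan is to take assertion~(i) as given---it is the cited Lemma~3.1 of \cite{MorawiecZurcher21}---and to derive assertion~(ii) from it by the reflection announced just before the statement. For a solution $\varphi$ of the \ignoreSpellCheck{MW}--problem I set $\psi(x)=1-\varphi(1-x)$ and organise the argument into three steps: (a) verify that $\psi$ is again a solution of the \ignoreSpellCheck{MW}--problem; (b) show that the hypothesis of~(ii) for $\varphi$ is \emph{literally} the hypothesis of~(i) for $\psi$; and (c) check that $\psi$ is singular if and only if $\varphi$ is. Applying assertion~(i) to $\psi$ then closes the loop.

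First I would dispose of step~(a). The boundary values $\psi(0)=1-\varphi(1)=0$ and $\psi(1)=1-\varphi(0)=1$ are immediate, and $\psi$ inherits continuity and weak monotonicity from $\varphi$ since $x\mapsto 1-x$ is a decreasing homeomorphism of $[0,1]$. Only the functional equation requires work, and it hinges on the two elementary identities $1-\frac{x}{2}=\frac{(1-x)+1}{2}$ and $1-\frac{x+1}{2}=\frac{1-x}{2}$. With these,
\[
\psi\!\left(\tfrac{x}{2}\right)+\psi\!\left(\tfrac{x+1}{2}\right)-\psi\!\left(\tfrac12\right)
=1-\varphi\!\left(\tfrac{(1-x)+1}{2}\right)-\varphi\!\left(\tfrac{1-x}{2}\right)+\varphi\!\left(\tfrac12\right),
\]
and feeding \eqref{eq:e} for $\varphi$ evaluated at the point $1-x$ into the right-hand side collapses it to $1-\varphi(1-x)=\psi(x)$, which is \eqref{eq:e} for $\psi$.

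Next, for step~(b) the change of variables $y=1-x$ gives
\[
\liminf_{x\to 1-}\frac{\varphi(1)-\varphi(x)}{1-x}
=\liminf_{y\to 0+}\frac{1-\varphi(1-y)}{y}
=\liminf_{y\to 0+}\frac{\psi(y)}{y},
\]
so the assumption of~(ii) for $\varphi$ is exactly the assumption of~(i) for $\psi$. For step~(c) I note that wherever $\varphi$ is differentiable at $1-x$ the function $\psi$ is differentiable at $x$ with $\psi'(x)=\varphi'(1-x)$; as $x\mapsto 1-x$ preserves Lebesgue measure, the set $\{\psi'=0\}$ has full measure precisely when $\{\varphi'=0\}$ does, so $\psi$ is singular iff $\varphi$ is. Combining (a)--(c) and applying assertion~(i) to $\psi$ yields assertion~(ii).

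The genuine difficulty sits entirely in assertion~(i), which is why I would simply invoke \cite{MorawiecZurcher21}: the heart of that result is to upgrade the \emph{single-scale} thinness recorded by $\liminf_{x\to0+}\varphi(x)/x=0$ into almost-everywhere vanishing of $\varphi'$, using \eqref{eq:e} to transport that thinness across all dyadic scales. Granting it, the only place in my reduction where anything could go wrong is the functional-equation verification in step~(a)---everything else is a change of variables or a measure-preserving symmetry---so that is the one computation I would write out in full, even though the reflection symmetry of \eqref{eq:e} makes it go through cleanly.
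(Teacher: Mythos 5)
Your proposal is correct and follows exactly the route the paper takes: assertion~(i) is quoted from Lemma~3.1 of \cite{MorawiecZurcher21}, and assertion~(ii) is deduced by applying~(i) to the reflected function $\psi(x)=1-\varphi(1-x)$, which the paper notes is again a solution of the MW--problem. Your write-up merely fills in the routine verifications (the functional equation for $\psi$, the change of variables in the liminf, and the equivalence of singularity under $x\mapsto 1-x$) that the paper leaves implicit, and all of these checks are accurate.
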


\begin{theorem}\label{th:35}
For each integer number $m\geq 2$ there exists $\varphi_P\in MW_m$ that is strictly increasing but not of the form \eqref{eq:int} with any $\mu\in\mathcal M_{(0,1)}$.
\end{theorem}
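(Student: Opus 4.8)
The plan is to exhibit, for each fixed $m\ge 2$, a single explicit $P\in\mathcal P_m$ with all coordinates positive, and to show that the associated $\varphi_P$ fails a necessary condition for being of the form \eqref{eq:int}. The condition I would use is weaker, and far cheaper to test, than full complete monotonicity: if $\varphi_P=\phi_\mu$ for some $\mu\in\mathcal M_{(0,1)}$, then by \eqref{eq:13} and \eqref{eq:int} the numbers $c_j:=\varphi_P(\tfrac{1}{2^j})=\int_{(0,1)}p^j\,d\mu(p)$ are the moments of $\mu$, so by the Cauchy--Schwarz inequality they are \emph{log-convex}: $c_{j-1}c_{j+1}\ge c_j^2$ for $j\ge 1$, and in particular $c_0c_2\ge c_1^2$ (this also follows from \cref{pp:22}, since complete monotonicity implies log-convexity). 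Hence it suffices to produce $P$ with every $p_k>0$ for which
\[
\varphi_P(1)\,\varphi_P(\tfrac14)<\varphi_P(\tfrac12)^2 .
\]
Strict monotonicity of such a $\varphi_P$ is then immediate: $p_k>0$ for all $k$ forces $K_P=K$, hence $A_*=[0,1]$ and $\mu_P$ has full support, so $\Phi_P$ is strictly increasing; since the $(i,k)=(0,0)$ summand of \eqref{eq:MW} equals $\tfrac1m\Phi_P(x)$ and every other summand is nondecreasing, $\varphi_P$ is strictly increasing.

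The only genuine work lies in evaluating $c_1=\varphi_P(\tfrac12)$ and $c_2=\varphi_P(\tfrac14)$ from \eqref{eq:MW}. For this I would use that $\Phi_P$ takes the value $\sum_{l<N}p_l$ at each point $\tfrac{N}{2^m}$ (because $\mu_P([\tfrac{l}{2^m},\tfrac{l+1}{2^m}])=p_l$ by \eqref{eq:inv} and \cref{lm:31}), together with the self-similarity relation $\Phi_P(\tfrac{N}{2^m}+t)=\sum_{l<N}p_l+p_N\,\Phi_P(2^m t)$ valid for $0\le t\le 2^{-m}$. Writing $s(k)$ for the number of ones in the $m$-digit binary expansion of $k$, a direct bookkeeping of the nested sums in \eqref{eq:MW} then produces closed forms that are polynomials in $(p_0,\dots,p_{2^m-1})$: at $x=\tfrac12$ every argument $\tfrac{2k+1}{2^{i+1}}$ lands on a level-$m$ dyadic point and one gets $c_1=1-\tfrac1m\sum_k p_k\,s(k)$, while at $x=\tfrac14$ all layers $i\le m-2$ again land on level-$m$ points and only the top layer $i=m-1$ requires one application of the self-similarity relation.

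To choose the witness $P$, I would start from the uniform vector $P_0=(2^{-m},\dots,2^{-m})$, for which $\mu_{P_0}$ is Lebesgue measure, $\Phi_{P_0}=\id_{[0,1]}$, and (one checks from \eqref{eq:MW}) $\varphi_{P_0}=\id_{[0,1]}$; here $c_0c_2=c_1^2$ holds with equality, so the identity sits exactly on the boundary of the log-convexity constraint. I then perturb only the first two coordinates, setting $p_0=2^{-m}-\eta$ and $p_1=2^{-m}+\eta$ with $\eta\in(0,2^{-m})$ and leaving all other coordinates equal to $2^{-m}$, so that $p_k>0$ for every $k$. Feeding this $P$ into the formulas of the previous paragraph gives
\[
c_1=\frac12-\frac{\eta}{m},\qquad c_2=\frac14-\frac{3\eta}{2m},
\]
whence
\[
c_0c_2-c_1^2=\left(\frac14-\frac{3\eta}{2m}\right)-\left(\frac12-\frac{\eta}{m}\right)^2=-\frac{\eta}{2m}-\frac{\eta^2}{m^2}<0 .
\]
Thus $(c_j)_{j\in\mathbb N_0}$ is not log-convex, so it is not a moment sequence and $\varphi_P$ is not of the form \eqref{eq:int}; by the first paragraph $\varphi_P$ is strictly increasing. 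This settles the theorem for every $m\ge 2$.

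The main obstacle is the explicit evaluation of $c_2=\varphi_P(\tfrac14)$ for general $m$: one must track, layer by layer in the double sum \eqref{eq:MW}, exactly which level-$m$ cells $[\tfrac{l}{2^m},\tfrac{l+1}{2^m}]$ contribute to each difference $\Phi_P(\tfrac{x+k}{2^i})-\Phi_P(\tfrac{k}{2^i})$, and then apply the self-similarity relation correctly on the single layer $i=m-1$ where the argument descends below scale $2^{-m}$. Everything else is soft: the reduction to the three-term inequality $c_0c_2\ge c_1^2$ is purely formal, and confining the perturbation to $p_0,p_1$ keeps the computation of $c_1,c_2$ short and renders the final sign manifest for \emph{all} admissible $\eta$, not merely small ones. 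The role of the uniform point is conceptual---it is a boundary point of the log-convexity region---so any perturbation decreasing $c_0c_2-c_1^2$ to first order suffices, and one can equally perturb any two coordinates carrying different values of the layered bit-statistic; the choice $(p_0,p_1)$ merely makes the arithmetic cleanest.
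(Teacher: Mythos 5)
Your proposal is correct, and it takes a genuinely different route from the paper's own proof. The paper never looks at moments: it starts from a Cantor-type solution $\varphi_P$ with $K_P\neq K$ (supplied by \cref{th:33}), argues that $\varphi_P$ is singular, and takes as witness the convex combination $\psi=\alpha\id_{[0,1]}+(1-\alpha)\varphi_P$ with $\alpha\in(0,1)$, which is strictly increasing; to exclude $\psi=\phi_\mu$ it splits a hypothetical representing measure at $p=\tfrac12$, writes $\psi=f+\mu(\{\tfrac12\})\id_{[0,1]}+g$, shows that $f$ and $g$ are singular using \eqref{eq:13} and the derivative criteria of \cref{lm:34}, and then differentiates a.e.\ to force $\alpha=\mu(\{\tfrac12\})$, which would exhibit $\varphi_P$ itself in the form \eqref{eq:int}, contradicting \cref{pp:11}. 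You instead exhibit a witness that is literally an element of $MW_m$: a full-support perturbation of the uniform weight vector, strictly increasing because $\mu_P$ then has full support, and excluded from the form \eqref{eq:int} by the failure of the log-convexity inequality $c_0c_2\geq c_1^2$ that Cauchy--Schwarz imposes on every moment sequence. I checked your two evaluations against \eqref{eq:MW} and \cref{lm:42}: indeed $c_1=1-\tfrac1m\sum_k p_k s(k)=\tfrac12-\tfrac{\eta}{m}$ (where $s(k)$ is the binary digit sum), and $c_2=\tfrac14-\tfrac{3\eta}{2m}$, the $m-2$ layers $i\leq m-3$ each contributing $\tfrac14$, the layer $i=m-2$ contributing $\tfrac14-\eta$, and the layer $i=m-1$ contributing $\Phi_P(\tfrac12)\sum_k p_{2k}=\tfrac12(\tfrac12-\eta)$; hence $c_0c_2-c_1^2=-\tfrac{\eta}{2m}-\tfrac{\eta^2}{m^2}<0$ for every $m\geq2$ and $\eta\in(0,2^{-m})$, including the boundary case $m=2$. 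As for what each approach buys: yours is more elementary (a three-point moment test, no singularity theory, no a.e.\ differentiation), and, importantly, it produces a function that belongs to $MW_m$ exactly as the statement requires --- a point the paper's proof leaves unaddressed, since its witness $\psi$ is a convex combination of the identity with an element of $MW_m$ and is never shown to lie in $MW_m$ itself (its membership there is dubious, as $\psi$ has a nontrivial absolutely continuous component). In exchange, the paper's argument needs no evaluation of $\varphi_P$ at any point, yields a one-parameter family of examples from every Cantor-type solution, and develops the measure-splitting technique at $p=\tfrac12$, which is of independent interest. The only thing separating your sketch from a complete proof is writing out the layer-by-layer bookkeeping for $\varphi_P(\tfrac14)$ that you describe; the bookkeeping itself is sound.
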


\begin{proof}
Fix an integer number $m\geq 2$. Fix also $P\in\mathcal P_m$ such that $\varphi_P$ is not strictly increasing; this is possible in view of \cref{th:33}. According to \cref{pp:11} we deduce that $\varphi_P$ is not of the form \eqref{eq:int} with any $\mu\in\mathcal M_{(0,1)}$, and since any solution of the \ignoreSpellCheck{MW}--problem is a convex combination of the identity function on $[0,1]$ and a singular solution of the \ignoreSpellCheck{MW}--problem (see \cite[Theorem~1.1~(i) and Remark~2.2]{MorawiecZurcher18}), we conclude that $\varphi_P$ is singular. 
Fix also $\alpha\in(0,1)$ and define $\psi\colon[0,1]\to[0,1]$ putting
\begin{equation*}
\psi(x)=\alpha x+(1-\alpha)\varphi_P(x).
\end{equation*}
Clearly, $\psi$ is a solution of the \ignoreSpellCheck{MW}--problem. We will show that there is no $\mu\in\mathcal M_{(0,1)}$ such that $\psi=\phi_\mu$, where $\phi_\mu$ is given by \eqref{eq:int}. 

Assume by contradiction that there exists $\mu\in\mathcal M_{(0,1)}$ such that $\psi=\phi_\mu$. 

Define $f,g\colon[0,1]\to[0,1]$ by
\begin{equation*}
f(x)=\int_{(0,\frac{1}{2})}\Phi(p,x)\,d\mu(p)\quad\text{and}\quad
g(x)=\int_{(\frac{1}{2},1)}\Phi(p,x)\,d\mu(p).
\end{equation*}
Then for every $x\in[0,1]$ we have
\begin{equation}\label{eq:37}
\alpha x+(1-\alpha)\varphi_P(x)=f(x)+\mu\left(\left\{\frac{1}{2}\right\}\right)x+g(x).
\end{equation}
Assume for a moment that $f$ and $g$ are singular functions. 
Then, differentiating equality \eqref{eq:37}, we get $\alpha=\mu\left(\left\{\frac{1}{2}\right\}\right)$, which implies that
\begin{equation*}
\varphi_P(x)=\frac{f(x)+g(x)}{1-\alpha}=\frac{1}{1-\alpha}\int_{(0,1)\setminus \{\frac{1}{2}\}}\Phi(p,x)\,d\mu(p).
\end{equation*}
Since the formula 
\begin{equation*}
\nu(B)=\frac{1}{1-\alpha}\mu\left(B\setminus\left\{\frac{1}{2}\right\}\right)\quad\text{for every }B\in\mathcal B, 
\end{equation*}
defines a measure belonging to $\mathcal M_{(0,1)}$, we obtain
\begin{equation*}
\varphi_P(x)=\int_{(0,1)}\Phi(p,x)\,d\nu(p),
\end{equation*}
a contradiction.
	
To complete the proof, it suffices to show that $f$ and $g$ are singular functions.

If $\mu((0,\frac{1}{2}))=0$, then $f=0$ and it is singular. Let $\mu((0,\frac{1}{2}))>0$. Then the function $F=\frac{1}{\mu((0,\frac{1}{2}))}f$ is a solution of the \ignoreSpellCheck{MW}--problem as it is of the form~\eqref{eq:int}. Applying~\eqref{eq:13} and the Lebesgue's Dominated Convergence Theorem, we get
\begin{align*}
\lim_{n\to \infty}\frac{F\left(\frac{1}{2^n}\right)-F(0)}{\frac{1}{2^n}}
&=\frac{1}{\mu((0,\frac{1}{2}))}\lim_{n\to\infty}2^n\int_{(0,\frac{1}{2})}\Phi\left(p,\frac{1}{2^n}\right)\,d\mu(p)\\
&=\frac{1}{\mu((0,\frac{1}{2}))}\lim_{n \to \infty}2^n\int_{(0,\frac{1}{2})}p^nd\mu(p)\\
&=\frac{1}{\mu((0,\frac{1}{2}))}\lim_{n \to \infty}\int_{(0,\frac{1}{2})}(2p)^nd\mu(p)=0.
\end{align*}
Consequently, assertion \cref{D0} of \cref{lm:34} yields the singularity of $F$ and, therefore, $f$.

Similarly, if $\mu((\frac{1}{2},1))=0$, then $g=0$ and it is singular. Let $\mu((\frac{1}{2},1))>0$. Then the function $G=\frac{1}{\mu((\frac{1}{2},1))}g$ is a solution of the \ignoreSpellCheck{MW}--problem. Applying~\eqref{eq:13}, the fact that $\Phi(p,x)=1-\Phi(1-p,1-x)$ for all $p\in(0,1)$ and $x\in[0,1]$ (see \cite[Proposition 2.3]{BergKruppel00a}), and the Lebesgue's Dominated Convergence Theorem, we get
\begin{equation*}
\begin{split}
\lim_{n\to\infty}\frac{G(1)-G\left(1-\frac{1}{2^n}\right)}{\frac{1}{2^n}}
&=\frac{1}{\mu((\frac{1}{2},1))}\lim_{n\to\infty}
\int_{(\frac{1}{2},1)}2^n\!\!\left[1-\Phi\!\left(p,1-\frac{1}{2^n}\right)\right]d\mu(p)\\
&=\lim_{n\to\infty}\frac{1}{\mu((\frac{1}{2},1))}
\int_{(\frac{1}{2},1)}2^n\Phi\!\left(1-p,\frac{1}{2^n}\right)\,d\mu(p)\\
&=\frac{1}{\mu((\frac{1}{2},1))}\lim_{n \to \infty}\int_{(\frac{1}{2},1)}(2(1-p))^nd\mu(p)=0.
\end{split}
\end{equation*}
Finally, by \cref{D1} of \cref{lm:34}, we obtain the singularity of $G$ and thus $g$.
\end{proof}


\renewcommand{\theequation}{4.\arabic{equation}}\setcounter{equation}{0}
\section{More new solutions of the \ignoreSpellCheck{MW}--problem}\label{S:4}
In this section, we will provide a way to define new solutions of the \ignoreSpellCheck{MW}--problem. However, we start with two observations.

\begin{prop}\label{pp:41}
For every $m\in\mathbb N$ we have $MW_m\subset MW_{2m}$.
\end{prop}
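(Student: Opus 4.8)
The plan is to show that every $\varphi_P\in MW_m$ can be rewritten as $\varphi_Q$ for a suitable $Q\in\mathcal P_{2m}$, so that $\varphi_P\in MW_{2m}$. The candidate $Q$ is forced by the observation that the maps of the level-$2m$ system are exactly the pairwise compositions of the level-$m$ maps: writing $g_j(x)=\frac{x+j}{2^{2m}}$ for the maps of the level-$2m$ system, one computes directly that $f_k\circ f_l=g_{l+k2^m}$, and the assignment $(k,l)\mapsto l+k2^m$ is a bijection from $K\times K$ onto $\{0,\ldots,2^{2m}-1\}$ (base-$2^m$ representation). I would therefore define $Q=(q_0,\ldots,q_{2^{2m}-1})$ by $q_{l+k2^m}=p_kp_l$. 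Since each $p_k\in[0,1)$ we have $q_j\in[0,1)$, and $\sum_j q_j=\big(\sum_k p_k\big)\big(\sum_l p_l\big)=1$, so indeed $Q\in\mathcal P_{2m}$ and $\varphi_Q\in MW_{2m}$ is well defined.

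Next I would prove $\Phi_P=\Phi_Q$ using uniqueness of the invariant measure. Iterating the invariance identity \eqref{eq:inv} for $\mu_P$ once and using $f_l^{-1}\circ f_k^{-1}=(f_k\circ f_l)^{-1}=g_{l+k2^m}^{-1}$ gives, for every Borel set $B$,
\begin{equation*}
\mu_P(B)=\sum_{k\in K}p_k\,\mu_P(f_k^{-1}(B))=\sum_{k,l\in K}p_kp_l\,\mu_P\big(g_{l+k2^m}^{-1}(B)\big)=\sum_{j=0}^{2^{2m}-1}q_j\,\mu_P(g_j^{-1}(B)).
\end{equation*}
This is precisely the invariance equation \eqref{eq:inv} for the level-$2m$ system with probabilities $Q$, so by uniqueness $\mu_Q=\mu_P$, and hence their distribution functions coincide, $\Phi_P=\Phi_Q=:\Phi$.

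It then remains to check that the defining formula \eqref{eq:MW} yields the same function whether the outer exponent is taken to be $m$ or $2m$, once $\Phi_P=\Phi_Q=\Phi$ is known. Abbreviating $S_i(x)=\sum_{k=0}^{2^i-1}\big[\Phi(\frac{x+k}{2^i})-\Phi(\frac{k}{2^i})\big]$, we have $\varphi_P=\frac1m\sum_{i=0}^{m-1}S_i$ and $\varphi_Q=\frac1{2m}\sum_{i=0}^{2m-1}S_i$, so everything reduces to the periodicity relation $S_{i+m}=S_i$ for all $i\ge 0$. To establish it I would split the index $k\in\{0,\ldots,2^{m+i}-1\}$ appearing in $S_{m+i}$ as $k=a2^i+b$ with $a\in\{0,\ldots,2^m-1\}$ and $b\in\{0,\ldots,2^i-1\}$; the identity $\frac{x+a2^i+b}{2^{m+i}}=\frac{1}{2^m}\big(\frac{x+b}{2^i}+a\big)$ lets the inner sum over $a$ collapse to $S_m(\frac{x+b}{2^i})-S_m(\frac{b}{2^i})$, yielding $S_{m+i}(x)=\sum_{b=0}^{2^i-1}\big[S_m(\frac{x+b}{2^i})-S_m(\frac{b}{2^i})\big]$. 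At this point the functional equation \eqref{eq:E}, which by \cref{pp:32} reads $\Phi=S_m$, turns the right-hand side into exactly $S_i(x)$.

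Finally, the periodicity gives $\sum_{i=m}^{2m-1}S_i=\sum_{i=0}^{m-1}S_i$, so that $\varphi_Q=\frac{1}{2m}\big(\sum_{i=0}^{m-1}S_i+\sum_{i=0}^{m-1}S_i\big)=\frac1m\sum_{i=0}^{m-1}S_i=\varphi_P$, proving $\varphi_P\in MW_{2m}$ and hence $MW_m\subset MW_{2m}$. The step I expect to require the most care is the periodicity argument: getting the base-$2^m$ splitting of the index and the two nested changes of variable right, so that the inner sum telescopes cleanly into $S_m$ before \eqref{eq:E} is applied. By contrast, the measure-theoretic identification $\Phi_P=\Phi_Q$ is conceptually the crux of the proof but becomes routine once the correct $Q$ with $q_{l+k2^m}=p_kp_l$ is written down.
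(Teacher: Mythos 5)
Your proposal is correct and takes essentially the same route as the paper: the paper likewise forms the product system $\widetilde{P}=(p_kp_l)_{k,l\in K}$, deduces $\Phi_P=\Phi_{\widetilde{P}}$ from uniqueness of the invariant measure, and then uses \eqref{eq:E} to identify the level sums, concluding $\varphi_P=\varphi_{\widetilde{P}}\in MW_{2m}$. The only (immaterial) difference is directional: the paper expands each level-$i$ sum ($0\le i\le m-1$) into the corresponding level-$(m+i)$ sum via \eqref{eq:E}, whereas you collapse $S_{m+i}$ down to $S_i$ — the same reindexing computation read in reverse.
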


\begin{proof}
Fix $P=(p_0,\ldots,p_{2^m-1})\in\mathcal P_m$ and consider the \ignoreSpellCheck{IFSwP} $(\widetilde{\mathcal F},\widetilde{P})$, where $\widetilde{\mathcal F}=\{f_k\circ f_l\,|\,k,l\in K\}$ and $\widetilde{P}=(p_kp_l)_{k,l\in K}$. Using \eqref{eq:inv}, for every $B\in\mathcal B([0,1])$, we get
\begin{equation*}
\mu_P(B)=\sum_{k\in K}p_k\sum_{l\in K}p_l\mu_P(f_l^{-1}(f_k^{-1}(B)))=\sum_{k,l\in K}p_kp_l\mu_P((f_k\circ f_l)^{-1}(B)).
\end{equation*}
By the uniqueness of the invariant measure $\mu_{\widetilde{P}}$ for $(\widetilde{\mathcal F},\widetilde{P})$, we have $\mu_P=\mu_{\widetilde{P}}$, and hence $\Phi_P=\Phi_{\widetilde{P}}$. Then, making use of \eqref{eq:E}, for every $x\in[0,1]$, we obtain
\begin{align*}
\varphi_P(x)&=\frac{1}{m}\sum_{i=0}^{m-1}\sum_{k=0}^{2^i-1}
\left[\Phi_P\left(\frac{x+k}{2^i}\right)-\Phi_P\left(\frac{k}{2^i}\right)\right]\\
&=\frac{1}{m}\sum_{i=0}^{m-1}\sum_{k=0}^{2^i-1}
\sum_{l=0}^{2^m-1}\left[\Phi_P\left(\frac{x+k+2^il}{2^{m+i}}\right)-\Phi_P\left(\frac{k+2^i l}{2^{m+i}}\right)\right]\\
&=\frac{1}{m}\sum_{i=0}^{m-1}\sum_{k=0}^{2^{m+i}-1}
\left[\Phi_P\left(\frac{x+k}{2^{m+i}}\right)-\Phi_P\left(\frac{k}{2^{m+i}}\right)\right]\\
&=\frac{1}{m}\sum_{i=m}^{2m-1}\sum_{k=0}^{2^i-1}
\left[\Phi_P\left(\frac{x+k}{2^i}\right)-\Phi_P\left(\frac{k}{2^i}\right)\right],
\end{align*}
and hence, remembering that $\Phi_P=\Phi_{\widetilde{P}}$, we arrive at
\begin{align*}
\varphi_P(x)&=\frac{1}{2}\varphi_P(x)+\frac{1}{2}\varphi_P(x)
=\frac{1}{2m}\sum_{i=0}^{m-1}\sum_{k=0}^{2^i-1}\left[\Phi_P\left(\frac{x+k}{2^i}\right)-\Phi_P\left(\frac{k}{2^i}\right)\right]\\
&\quad + \frac{1}{2m}\sum_{i=m}^{2m-1}\sum_{k=0}^{2^i-1}
\left[\Phi_P\left(\frac{x+k}{2^i}\right)-\Phi_P\left(\frac{k}{2^i}\right)\right]\\
&=\frac{1}{2m}\sum_{i=0}^{2m-1}\sum_{k=0}^{2^i-1}\left[\Phi_P\left(\frac{x+k}{2^i}\right)-\Phi_P\left(\frac{k}{2^i}\right)\right]\\
&=\frac{1}{2m}\sum_{i=0}^{2m-1}\sum_{k=0}^{2^i-1}\left[\Phi_{\widetilde{P}}\left(\frac{x+k}{2^i}\right)-\Phi_{\widetilde{P}}\left(\frac{k}{2^i}\right)\right]=\varphi_{\widetilde{P}}(x).
\end{align*}
In consequence, $\varphi_P=\varphi_{\widetilde{P}}\in MW_{2m}$.
\end{proof}

To formulate the next lemma, we adopt the convention that $\sum_{k=0}^{-1}p_k=0$.

\begin{lemma}\label{lm:42}
Let $P\in\mathcal P_m$. Then 
\begin{equation*}
\Phi_P\left(\frac{x+l}{2^m}\right)=\sum_{k=0}^{l-1}p_k+p_l\Phi_P(x)
\end{equation*}
for all $l\in K$ and $x\in[0,1]$. 
\end{lemma}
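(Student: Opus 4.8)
The plan is to read the identity directly off the invariance relation \eqref{eq:inv} (equivalently, the defining property of $\mu_P$), applied to the single Borel set $B=[0,\frac{x+l}{2^m}]$. The key observation is that $\frac{x+l}{2^m}=f_l(x)$, so that $\Phi_P(\frac{x+l}{2^m})=\mu_P([0,f_l(x)])$, and the self-similar structure of $\mu_P$ should split this mass cleanly according to the cylinders $f_k([0,1])=[\frac{k}{2^m},\frac{k+1}{2^m}]$ for $k\in K$.

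First I would fix $l\in K$ and $x\in[0,1]$, set $B=[0,\frac{x+l}{2^m}]$, and compute the preimages $f_k^{-1}(B)$ for each $k\in K$. Since $f_k(y)=\frac{y+k}{2^m}$, the condition $f_k(y)\in B$ is equivalent to $0\le y\le x+l-k$, so $f_k^{-1}(B)=[0,1]\cap[0,x+l-k]$. This divides naturally into three cases. If $k<l$, then $x+l-k\ge 1$ and $f_k^{-1}(B)=[0,1]$, whence $\mu_P(f_k^{-1}(B))=1$. If $k=l$, then $f_k^{-1}(B)=[0,x]$, whence $\mu_P(f_k^{-1}(B))=\Phi_P(x)$. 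If $k>l$, then $x+l-k\le 0$, so $f_k^{-1}(B)$ is either empty or the singleton $\{0\}$.

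The only delicate point—and the one place where the standing hypotheses of the subsection genuinely enter—is the last case: I would invoke \cref{lm:31} to conclude that $\mu_P(\{0\})=0$, so that $\mu_P(f_k^{-1}(B))=0$ for every $k>l$. Substituting the three evaluations into \eqref{eq:inv} then yields
\begin{equation*}
\Phi_P\left(\frac{x+l}{2^m}\right)=\sum_{k\in K}p_k\,\mu_P(f_k^{-1}(B))=\sum_{k=0}^{l-1}p_k+p_l\Phi_P(x),
\end{equation*}
which is the asserted formula; the convention $\sum_{k=0}^{-1}p_k=0$ handles the boundary case $l=0$. I do not expect a genuine obstacle beyond the careful bookkeeping of interval endpoints and the measure-zero argument for the singleton, both of which are routine once \cref{lm:31} is in hand.
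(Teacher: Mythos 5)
Your proof is correct and follows essentially the same route as the paper: apply the invariance relation \eqref{eq:inv} to $B=\left[0,\frac{x+l}{2^m}\right]$ and evaluate $\mu_P(f_k^{-1}(B))$ case by case for $k<l$, $k=l$, $k>l$. If anything, you are slightly more careful than the paper, which records the preimage for $k>l$ simply as $\emptyset$ and thus glosses over the boundary case $x=1$, $k=l+1$, where the preimage is the singleton $\{0\}$; your appeal to \cref{lm:31} disposes of that point cleanly.
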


\begin{proof}
Fix $l\in K$ and $x\in[0,1]$. Then
\begin{align*}
\Phi_P\left(\frac{x+l}{2^m}\right)&=\mu_P\left(\left[0,\frac{x+l}{2^m}\right]\right)
=\sum_{k\in K}p_k\mu_P\left(f_k^{-1}\left(\left[0,\frac{x+l}{2^m}\right]\right)\right)\\
&=\sum_{k=0}^{l-1}p_k\mu_P([0,1])+p_l\mu_P([0,x])+\sum_{k=l+1}^{2^m-1}p_k\mu_P(\emptyset)\\
&=\sum_{k=0}^{l-1}p_k+p_l\Phi_P(x),
\end{align*}
and the proof is complete.
\end{proof}

\begin{prop}\label{pp:43}
For all distinct $m,n\in\mathbb N$ we have $MW_m\neq MW_n$.
\end{prop}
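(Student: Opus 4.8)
The plan is to attach to each family $MW_m$ a number that depends only on $MW_m$ as a set of functions and that recovers $m$. For a solution $\varphi$ of the \ignoreSpellCheck{MW}--problem let $d\varphi$ denote its Lebesgue--Stieltjes measure and consider $\dim_{\mathrm H}\operatorname{supp}(d\varphi)$. I will show that $\min_{\varphi\in MW_m}\dim_{\mathrm H}\operatorname{supp}(d\varphi)=\frac1m$. Since $\dim_{\mathrm H}\operatorname{supp}(d\varphi)$ is intrinsic to $\varphi$, this minimum is an invariant of the \emph{set} $MW_m$, so the equality $MW_m=MW_n$ will force $\frac1m=\frac1n$, i.e.\ $m=n$.

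The core step is to show that for every $P\in\mathcal P_m$ one has $\dim_{\mathrm H}\operatorname{supp}(d\varphi_P)=\frac{\log|K_P|}{m\log 2}$, and I would do this by sandwiching the support between two sets of equal dimension. Reading off~\eqref{eq:MW}, the summand with $i=k=0$ is exactly $\Phi_P$ and every summand is nondecreasing in $x$; hence $d\varphi_P\geq\frac1m\,\mu_P$ as measures, giving $\operatorname{supp}(d\varphi_P)\supseteq\operatorname{supp}(\mu_P)=A_*$. In the opposite direction, when $K_P\neq K$ the proof of \cref{th:33} shows $\varphi_P$ is constant on each component of $[0,1]\setminus A$, so $\operatorname{supp}(d\varphi_P)\subseteq A$; and when $K_P=K$ one has $A_*=[0,1]$, forcing full support. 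It then remains to compute dimensions: $A_*$ is the attractor of $\{f_k\mid k\in K_P\}$, a system of $|K_P|$ similarities of ratio $2^{-m}$ whose open images $(\frac{k}{2^m},\frac{k+1}{2^m})$ are disjoint, so the open set condition holds and the standard similarity-dimension formula (see \cite{Hutchinson81}) yields $\dim_{\mathrm H}A_*=\frac{\log|K_P|}{m\log 2}$; since $A$ is a finite union of affine copies of $A_*$, it has the same dimension. The squeeze gives the claimed value in all cases.

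Finally, as $P$ ranges over $\mathcal P_m$ the integer $|K_P|$ ranges over $\{2,\ldots,2^m\}$ (it is at least two, since all $p_k<1$), so $\frac{\log|K_P|}{m\log 2}$ ranges over $\{\frac{\log_2 j}{m}\mid 2\le j\le 2^m\}$, whose minimum is $\frac1m$, attained whenever $|K_P|=2$. This establishes $\min_{\varphi\in MW_m}\dim_{\mathrm H}\operatorname{supp}(d\varphi)=\frac1m$ and hence the proposition. The step I expect to require the most care is the exact determination of $\operatorname{supp}(d\varphi_P)$: the lower inclusion $A_*\subseteq\operatorname{supp}(d\varphi_P)$ rests on the measure domination $d\varphi_P\geq\frac1m\mu_P$ read off from~\eqref{eq:MW}, while the upper inclusion together with the matching of $\dim_{\mathrm H}A$ and $\dim_{\mathrm H}A_*$ relies on the geometric description of $A$ supplied by \cref{th:33}; securing both inclusions cleanly is what makes the dimension computable and the invariant well defined.
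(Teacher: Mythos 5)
Your proposal is correct, but it takes a genuinely different route from the paper's. The paper argues by explicit computation on a distinguished element: it fixes $P=(0,\ldots,0,p,1-p)\in\mathcal P_m$, uses the self-similarity relation of \cref{lm:42} and a two-sided induction (sequences increasing, respectively decreasing, to $\frac{2^m-2}{2^m-1}$) to show that $\Phi_P$ vanishes exactly on $\left[0,\frac{2^m-2}{2^m-1}\right]$, and deduces \eqref{eq:41}: $\varphi_P$ vanishes exactly on $\left[0,\frac{2^{m-1}-1}{2^m-1}\right]$; the separating datum is thus the right endpoint of the maximal flat interval at $0$, an intrinsic feature of the function that varies with $m$. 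You instead attach to the whole family the numerical invariant $\min_{\varphi\in MW_m}\dim_{\mathrm H}\operatorname{supp}(d\varphi)$ and evaluate it as $\frac1m$: the domination $d\varphi_P\geq\frac1m\mu_P$ (read off from the $i=k=0$ summand of \eqref{eq:MW}, using $\Phi_P(0)=0$ from \cref{lm:31}) gives $\operatorname{supp}(d\varphi_P)\supseteq A_*$; the proof of \cref{th:33} gives $\operatorname{supp}(d\varphi_P)\subseteq A$ when $K_P\neq K$ (and $A_*=[0,1]$ otherwise); and the Moran--Hutchinson formula under the open set condition (with $U=(0,1)$), together with invariance of Hausdorff dimension under similarities and finite unions, yields $\dim_{\mathrm H}\operatorname{supp}(d\varphi_P)=\frac{\log|K_P|}{m\log 2}$ in all cases, whose minimum over $|K_P|\in\{2,\ldots,2^m\}$ is $\frac1m$. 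Each approach buys something. The paper's is elementary and self-contained, staying within the functional-equation machinery already developed, and its byproduct \eqref{eq:41} is reused in \cref{rm:44} and in the subsequent discussion of the Hausdorff moment problem. Yours imports standard fractal geometry but is more conceptual: it shows that $|K_P|^{1/m}$ (equivalently, the dimension of the support of $d\varphi_P$) is intrinsic to the function $\varphi_P$, it separates all the families by a single number, and it gives slightly more than stated, namely $MW_m\not\subseteq MW_n$ whenever $m>n$, since every element of $MW_n$ has dimension at least $\frac1n>\frac1m$ while $MW_m$ contains an element of dimension exactly $\frac1m$.
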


\begin{proof}
Fix $m\in \mathbb N$.
To get the required assertion we fix $P=(0,\ldots,0,p,1-p)\in\mathcal P_m$ and note that it is sufficient to prove that 
\begin{equation}\label{eq:41}
\varphi_P(x)=0\quad\text{if and only if}\quad x\in \left[0,\frac{2^m-2^{m-1}-1}{2^m-1}\right].
\end{equation}
Define recursively two sequences $(x_q)_{q\in\mathbb N_0}$ and $(y_q)_{q\in\mathbb N_0}$ putting
\begin{equation*}
x_0=\frac{2^m-2}{2^m},\quad y_0=\frac{2^m-1}{2^m}
\end{equation*} 
and
\begin{equation*}
x_{q}=\frac{x_{q-1}+2^m-2}{2^m},\quad y_{q}=\frac{y_{q-1}+2^m-2}{2^m}
\quad\text{for every }m\in\mathbb N.
\end{equation*} 
It is easy to check that $(x_q)_{q\in\mathbb N_0}$ is strictly increasing, $(y_q)_{q\in\mathbb N_0}$ is strictly decreasing and  $\lim_{q\to\infty}x_q=\lim_{q\to\infty}y_q=\frac{2^m-2}{2^m-1}$.

We begin with proving that 
\begin{equation}\label{eq:42}
\Phi_P(x)=0\quad\text{if and only if}\quad x\in \left[0,\frac{2^m-2}{2^m-1}\right].
\end{equation}
First, we will show that $\Phi_P(x)=0$ for every $x\in [0,\frac{2^m-2}{2^m-1}]$. Since $\Phi_P$ is increasing and continuous, and $(x_q)_{q\in\mathbb N_0}$ is increasing with $\lim_{q\to\infty}x_q=\frac{2^m-2}{2^m-1}$, it suffices to show that $\Phi_P(x_q)=0$ for every $q\in\mathbb N_0$. We will do it by induction. For $q=0$, \cref{lm:42} gives
\begin{equation*}
\Phi_P(x_0)=\Phi_P\left(\frac{2^m-2}{2^m}\right)=p\Phi_P(0)=0.    
\end{equation*}
If $q\in\mathbb N$ and $\Phi_P(x_{q-1})=0$, then applying again \cref{lm:42} we get
\begin{equation*}
\Phi_P(x_{q})=p\Phi_P(x_{q-1})=0.    
\end{equation*}
It remains to show that $\Phi_P(x)>0$ for every $x\in (\frac{2^m-2}{2^m-1},1]$. Again, since $\Phi_P$ is increasing and $(y_q)_{q\in\mathbb N_0}$ is decreasing with $\lim_{q\to\infty}y_q=\frac{2^m-2}{2^m-1}$, it suffices to shown that $\Phi_P(y_q)>0$ for every $q\in\mathbb N_0$. As before we will proceed by induction. For $q=0$, \cref{lm:42} gives
\begin{equation*}
\Phi_P(y_0)=\Phi_P\left(\frac{2^m-1}{2^m}\right)=p+(1-p)\Phi_P(0)=p>0.
\end{equation*}
If $q\in\mathbb N$ and $\Phi_P(y_{q-1})>0$, then applying again \cref{lm:42} we get
\begin{equation*}
\Phi_P(y_{q})=p\Phi_P(y_{q-1})>0.    
\end{equation*}
and the proof of \eqref{eq:42} is complete.

Now, we pass to the proof of \eqref{eq:41}.
 
Fix $x\in[0,\frac{2^m-2^{m-1}-1}{2^m-1}]$. If $i\in \{0,\ldots,m-1\}$ and $k\in \{0,\ldots,2^i-1\}$, then
\begin{equation*}
\begin{split}
0&\leq\frac{x+k}{2^i}\leq\frac{x+2^i-1}{2^i}\leq\frac{x+2^{m-1}-1}{2^{m-1}}
\leq \frac{\frac{2^m-2^{m-1}-1}{2^m-1}+2^{m-1}-1}{2^{m-1}}\\
&=\frac{2^m-2}{2^m-1},
\end{split}
\end{equation*}
and \eqref{eq:42} implies $\varphi_P(x)=0$. 

Fix now $x\in(\frac{2^m-2^{m-1}-1}{2^m-1},\frac{2^m-2^{m-1}}{2^m-1}]$. If
$i\in \{0,\ldots,m-1\}$ and $k\in \{0,\ldots,2^i-1\}$, then $\frac{x+k}{2^i}\leq 1$ and
\begin{equation*}
\frac{2^m-2}{2^m-1}<\frac{x+k}{2^i}\quad\text{if and only if}\quad i=m-1\text{ and }k=2^{m-1}-1.
\end{equation*}
This together with \eqref{eq:42} yields
\begin{equation}
\begin{split}\label{comp pos}
\varphi_P(x)&=\frac{1}{m}\left[\Phi_P\left(\frac{x+2^{m-1}-1}{2^{m-1}}\right)
-\Phi_P\left(\frac{2^{m-1}-1}{2^{m-1}}\right)\right]\\
&=\frac{1}{m}\Phi_P\left(\frac{x+2^{m-1}-1}{2^{m-1}}\right)>0,
\end{split}
\end{equation}
and since $\varphi_P$ is increasing, we conclude that \eqref{eq:41} holds.
\end{proof} 

\begin{remark}\label{rm:44}
Let $m\geq 2$ be an integer number. If $P=(0,\ldots,0,p,1-p)\in\mathcal P_m$, then $\varphi_P\left(\frac{1}{2}\right)=\frac{p}{m}$ and $\varphi_P\left(\frac{1}{2^{j+1}}\right)=0$ for every $j\in\mathbb N$.
\end{remark}
\begin{proof}
Fix $P=(0,\ldots,0,p,1-p)\in\mathcal P_m$. From the proof of \cref{pp:43} we know that \eqref{eq:41} and \eqref{eq:42} hold. Since
\begin{equation*}
\frac{1}{3}\leq\frac{2^m-2^{m-1}-1}{2^m-1}<\frac{1}{2}<\frac{2^m-2^{m-1}}{2^m-1},
\end{equation*}
\eqref{eq:41} yields $\varphi_P\left(\frac{1}{2^{j+1}}\right)=0$ for every $j\in\mathbb N$.
Using~\eqref{comp pos},
\begin{equation*}
\varphi_P\left(\frac{1}{2}\right)=\frac{1}{m}\Phi_P\left(\frac{\frac{1}{2}+2^{m-1}-1}{2^{m-1}}\right)=\frac{1}{m}\Phi_P\left(\frac{2^m-1}{2^m}\right)=\frac{p}{m},
\end{equation*}
ending the proof.
\end{proof}
If $\varphi_P$ is the solution of the \ignoreSpellCheck{MW}--problem from \cref{rm:44}, then 
\begin{equation*}
\lim_{n\to\infty}\sum_{j=0}^{n}(-1)^j\binom{n}{j}\varphi_P\left(\frac{1}{2^{j}}\right)=
\lim_{n\to\infty}\left[\varphi_P(1)-n\varphi_P\left(\frac{1}{2}\right)\right]=-\infty.
\end{equation*}
In particular, $\varphi_P$ does not satisfy the Hausdorff moment problem. Recall (see \cref{pp:11}), that $\varphi_P$ is also not of the integral form \eqref{eq:int}. This sheds some light on \cref{prob:24}. 
\\
Let us also note that \cref{rm:44} implies that for all integer numbers $n>m\geq 2$ there are $\varphi_P\in MW_m$ and $\varphi_{P'}\in MW_n$ such that $\varphi_P(\frac{1}{2^j})=\varphi_{P'}(\frac{1}{2^j})$ for every $j\in\mathbb N$, but $\varphi_P\neq\varphi_{P'}$.

Let us finish this section by showing how to produce more new solutions of the \ignoreSpellCheck{MW}--problem. For this purpose we put $\Delta_m=\{P\in\mathcal P_m\,|\,K_P=K\}$. Clearly, $\Delta_m\subset[0,1]^{2^m}$. Denote by $\mathcal B([0,1]^{2^m})$ the family of all Borel subsets of the cube $[0,1]^{2^m}$ and by $\mathcal M_{\Delta_m}$ the family of all probability measure defined on $\mathcal B([0,1]^{2^m})$ supported on $\Delta_m$, hence $\nu\in\mathcal M_{\Delta_m}$ if $\nu(\Delta_m)=1$. Following the idea of \cite{MorawiecZurcher21}, explained in the paragraph just before \cref{pp:11} in \cref{S:1}, we define $\Psi\colon\Delta_m\times [0,1]\to [0,1]$ by 
\begin{equation*}
\Psi(P,x)=\varphi_P(x)
\end{equation*}
and note that $\Psi$ is increasing and continuous with respect to the second variable. Then for every $\nu\in\mathcal M_{\Delta_m}$ the formula
\begin{equation*}\label{eq:int-new}
\psi_\nu(x)=\int_{\Delta_m}\Psi(P,x)\,d\nu(P)
\end{equation*}
defines a solution of the \ignoreSpellCheck{MW}--problem. Next given a sequences $(\alpha_n)_{n\in\mathbb N}$ of nonnegative real numbers such that $\sum_{n\in\mathbb N}\alpha_n=1$ and a sequence $(\nu_n)_{n\in\mathbb N}$ with $\nu_n\in\mathcal M_{\Delta_n}$ for every $n\in\mathbb N$, we put
\begin{equation}\label{psiSum}
\psi=\sum_{n\in\mathbb N}\alpha_n\psi_{\nu_n}.	
\end{equation}
It is clear that $\psi$ is an increasing function, $\psi(0)=0$, $\psi(1)=1$, and $\psi(x)=\psi(\frac{x}{2})+\psi(\frac{x+1}{2})-\psi(\frac{1}{2})$ for every $x\in[0,1]$.
By the Weierstrass M\nobreakdash-test, $\psi$ is continuous, hence $\psi$ is
a solution of the \ignoreSpellCheck{MW}--problem.
In this manner we can produce a large family of solutions of the \ignoreSpellCheck{MW}--problem. However, we still do not know if we can describe all solutions of the \ignoreSpellCheck{MW}--problem by using the above procedure. This leads to the following question.

\begin{problem}
Can every solution of the \ignoreSpellCheck{MW}--problem be obtained as described above, i.e.\ is of the form~\eqref{psiSum}?
\end{problem}


\section*{Acknowledgement}
The research was supported by the University of Silesia  Mathematics Department (Iterative Functional Equations and Real Analysis program).

\subsection*{Author contributions} All work regarding this manuscript was carried out by
a joint effort of the two authors.

\subsection*{Data availability statement} Data sharing not applicable to this article as no
datasets were generated or analysed during the current study.

\section*{Declarations}
\subsection*{Conflict of interest} The authors declare no competing interests.

\bibliographystyle{plain}
\bibliography{bibliography}
\end{document}